\theoremstyle{definition}
\numberwithin{equation}{section}
\newtheorem{thm}{Theorem}[section]
\newtheorem{dfn}[thm]{Definition}
\newtheorem{prop}[thm]{Proposition}
\newtheorem{lem}[thm]{Lemma}
\newtheorem{rem}[thm]{Remark}
\newcommand{\exam}{\noindent{\textbf{Example}:}}
\newcommand{\rmk}{\noindent{\textbf{Remark}:}}
\newcommand{\obs}{\noindent{\textbf{Observation}:}}
\newcommand{\ntt}{\noindent{\textbf{Notation}:}}
\newcommand{\ack}{\noindent{\textbf{Acknowledgement}:}}
\newcommand{\mf}[1]{{\mathfrak{#1}}}
\newcommand{\msf}[1]{{\mathsf{#1}}}
\newcommand{\bb}[1]{{\mathbb{#1}}}
\newcommand{\mca}[1]{{\mathcal{#1}}}
\newcommand{\bracket}[1]{{\langle {#1} \rangle}}
\def\Aut{\mathop{\mathrm{Aut}}\nolimits}
\def\AutH{\mathop{\mathrm{Aut}_{\mathrm{Hodge}}}\nolimits}
\def\deg{\mathop{\mathrm{deg}}\nolimits}
\def\det{\mathop{\mathrm{det}}\nolimits}
\def\dim{\mathop{\mathrm{dim}}\nolimits}
\def\div{\mathop{\mathrm{div}}\nolimits}
\def\Div{\mathop{\mathrm{Div}}\nolimits}
\def\Fix{\mathop{\mathrm{Fix}}\nolimits}
\def\id{\mathop{\mathrm{id}}\nolimits}
\def\Pic{\mathop{\mathrm{Pic}}\nolimits}
\def\rank{\mathop{\mathrm{rank}}\nolimits}
\def\Sing{\mathop{\mathrm{Sing}}\nolimits}
\title{Enriques surfaces covered by Jacobian Kummer surfaces\thanks{2000 
Mathematics Subject Classification, 14J28}}
\author{Hisanori Ohashi\thanks{Kyoto University, e-mail: pioggia@kurims.kyoto-u.ac.jp}}
\date{June 19, 2008}
\begin{document}
\maketitle
\begin{abstract}
This paper classifies Enriques surfaces whose $K3$-cover is a fixed Picard-general 
Jacobian Kummer surface. There are exactly $31$ such surfaces.
We describe the free involutions which give these Enriques surfaces explicitly.
As a biproduct, we show that $\Aut (X)$ is generated by elements of order $2$,
which is an improvement of the theorem of S. Kondo.
\end{abstract}

\section{Introduction}

A $K3$ surface is a simply connected compact complex surface whose 
canonical bundle is trivial. Every Enriques surface appears as a 
quotient of a $K3$ surface by a fixed-point-free (shortly, free) involution.
Theoretically, to consider an Enriques surface is equivalent to
consider the pair of the covering $K3$ surface and the free involution.
For example, the period 
map for Enriques surfaces is constructed under this description.
But the properties of free involutions on a {\em fixed} $K3$ surface are 
rather unclear to us. The existence is already a special property, 
their geometric realizations and the isomorphism classes of 
the quotient Enriques surfaces are other problems. 

For a fixed $K3$ surface $X$, two quotient Enriques surfaces are isomorphic if and only if 
the two free involutions are conjugate in $\Aut (X)$. In \cite{ohashi} it is shown 
that the number of the conjugacy classes of 
free involutions (and more generally, of finite subgroups) are finite.
There this number, i.e., the number of isomorphism classes of quotient Enriques surfaces,
is computed for $K3$ surfaces with Picard number $\rho=11$ or for
Kummer surfaces associated with the product of two elliptic curves
whose periods are very general.

The aim of this paper is to study fixed-point-free
involutions on surfaces studied in \cite{keum97,kondo98}.
Let $C$ be a smooth projective curve of genus $2$.
Its Jacobian variety $J(C)$ is the abelian surface parametrizing
divisor classes on $C$ of degree $0$.
The quotient surface $J(C)/\{\pm 1_{J(C)}\}$ has $16$ nodes
and can be embedded into $\bb{P}^3$ as a quartic hypersurface.
We call it the {\em{Kummer quartic surface}} associated with $C$ and 
denote by $\overline{Km} (J(C))=:\overline{X}$.
The minimal desingularization 
$Km(J(C))=:X$ of $\overline{Km} (J(C))$ is called the {\em{Jacobian Kummer surface}}
associated with $C$, which is a $K3$ surface. 
$X$ is {\em{Picard-general}} if the Picard number of $X$ equals $17$,
the minimum possible value. In what follows, $X$ will always be a Picard-general 
Jacobian Kummer surface except for Sections \ref{method} and \ref{166}.

In \cite{mukai}, Mukai observed that there exist three kinds of free involutions on $X$.
\begin{itemize}
\item {\em{A switch}} associated with {\em{an even theta characteristic $\beta$}}.
\item {\em{A Hutchinson-G\"{o}pel (shortly HG) involution}} associated 
with {\em{a G\"{o}pel tetrad $G$}}.
\item {\em{A Hutchinson-Weber (shortly HW) involution}} associated with {\em{a Weber hexad $W$}}.
\end{itemize}

Essentially these automorphisms date back more than a century, but 
their freeness are found only recently in comparison.
Mukai studied HG involutions
in connection with the numerically reflective involutions of Enriques surfaces.
Also he conjectured that these are the all free involutions on $X$.
In this paper we prove the following theorem and confirm the conjecture.
\begin{thm}\label{thm1}
On a Picard-general Jacobian Kummer surface $X$, 
there are exactly $31=10+15+6$ free involutions up to conjugacy in $\Aut (X)$.
$10$ are switches, $15$ are HG involutions and $6$ are HW involutions.
\end{thm}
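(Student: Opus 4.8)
The plan is to translate the problem into lattice theory via the Global Torelli theorem and then reduce Theorem~\ref{thm1} to an orbit count. Since $X$ is Picard-general, $\Pic(X)$ has rank $17$ and the transcendental lattice $T$ has rank $5$; under this genericity the group of Hodge isometries of $T$ is reduced to $\{\pm 1\}$, so by Torelli every automorphism of $X$ is determined by its action on $H^2(X,\bb{Z})$, the map $\Aut(X)\to O(H^2(X,\bb{Z}))$ is injective, and two automorphisms are conjugate in $\Aut(X)$ precisely when their cohomological actions are conjugate by an element of the image of $\Aut(X)$. I would first fix the lattice-theoretic shape of a free involution. A fixed-point-free involution $\varepsilon$ satisfies $\varepsilon^*\omega=-\omega$ on the holomorphic $2$-form, so $T$ lies in the anti-invariant lattice, and by Nikulin's criterion $\varepsilon$ is free exactly when its invariant lattice $H^2(X,\bb{Z})^{\varepsilon}$ is isometric to $U(2)\oplus E_8(-2)$ (the case $(r,a,\delta)=(10,10,0)$). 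As $T$ is anti-invariant, this invariant lattice $M\cong U(2)\oplus E_8(-2)$ is automatically a primitive sublattice of $\Pic(X)$.

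The first main step is then to establish the dictionary
\[
\{\text{free involutions}\}/\text{conj.} \;\longleftrightarrow\; \{\text{primitive } M\cong U(2)\oplus E_8(-2)\subset \Pic(X)\}/\Aut(X),
\]
where the right-hand side is restricted to those $M$ whose orthogonal complement in $H^2(X,\bb{Z})$ is isometric to $U\oplus U(2)\oplus E_8(-2)$ and whose gluing yields an involution acting as $-1$ on $T$. Reconstruction of $\varepsilon$ from such an $M$ proceeds by gluing the isometry $\id_M$ and $-\id$ on $M^{\perp}$ along their discriminant forms (Nikulin), then composing with a suitable Weyl-group element so as to preserve the ample cone, whereupon Torelli supplies a genuine automorphism. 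I expect this existence half to be routine once the discriminant-form bookkeeping is in place.

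Next I would realize the three classical families and pin down the numbers $10,15,6$. The $16$ nodes of $\overline{X}$ are indexed by the $2$-torsion $J(C)[2]\cong(\bb{Z}/2)^4$ equipped with the Weil pairing, and the combinatorics of the three families is governed by this symplectic $\bb{F}_2$-space: switches correspond to the $10$ even theta characteristics, HG involutions to the $15$ G\"opel tetrads (the Lagrangian $2$-planes, of which a standard count gives exactly $15$), and HW involutions to the $6$ Weber hexads. For each family I would exhibit the induced isometry on $H^2(X,\bb{Z})$, read off that its invariant lattice is $U(2)\oplus E_8(-2)$, and thereby reconfirm freeness directly from Nikulin's criterion. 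Since $\Aut(X)$ acts on the associated combinatorial data through the affine symplectic group $(\bb{Z}/2)^4\rtimes \mathrm{Sp}(4,\bb{F}_2)$ preserving each of the three types, the three families are mutually non-conjugate.

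The heart of the argument, and the step I expect to be the main obstacle, is completeness together with the exact orbit count. Using the explicit structure of $\Pic(X)$ (the Kummer lattice generated by the $16$ node classes together with the polarization) and Kondo's generators for $\Aut(X)$, I would enumerate all primitive sublattices isometric to $U(2)\oplus E_8(-2)$ satisfying the complement and gluing conditions above, and show that they fall into exactly three $\Aut(X)$-orbits of sizes $10$, $15$ and $6$. Concretely this means computing the primitive embeddings via discriminant forms, discarding those failing the freeness condition, and then tracking $\Aut(X)$-orbits through its action on the combinatorial data. The delicate points are controlling the discriminant-form contributions so as to be certain that no further orbit survives, and verifying that within each family $\Aut(X)$ acts transitively with no accidental splitting, so that the total is precisely $31=10+15+6$. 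Establishing the absence of any additional orbit is exactly the content of Mukai's conjecture, and is where the real work lies.
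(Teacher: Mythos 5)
Your reduction to lattice theory is sound in outline and even close in spirit to the paper's Proposition \ref{correspondence}, but your plan for the decisive steps contains both a genuine error and a genuine gap. The error: you propose to show the admissible sublattices ``fall into exactly three $\Aut(X)$-orbits of sizes $10$, $15$ and $6$'' and that ``within each family $\Aut(X)$ acts transitively.'' Transitivity within each family would yield exactly $3$ conjugacy classes, not $31$; the theorem asserts the opposite, namely that involutions within each family are pairwise \emph{non}-conjugate: the $10$ switches give $10$ distinct classes, the $60$ HG involutions (not $15$ --- there are $60$ G\"{o}pel tetrads, and $15$ G\"{o}pel subgroups after using $\sigma_{t(G)}=t\sigma_G t$) give $15$ classes, and the $192$ Weber hexads (not $6$) give $6$ classes after the translation relation and one further hidden relation through switches. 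You offer no invariant capable of separating, say, the ten switches from one another; indeed you assert they are all conjugate. The paper's separating invariant is the patching subgroup $\Gamma_\sigma\subset A_{NS(X)}$, i.e.\ the order-$4$ glue group of $N=\overline{K\oplus T_X}\cong U\oplus U(2)\oplus E_8(2)$, computed explicitly in each case (e.g.\ $\bracket{H/4+\sum_{\alpha\in R}N_{\alpha}/2}$ for a switch, $R$ a Rosenhain subgroup) and shown to take $31$ pairwise distinct values.

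The gap: you defer completeness (``where the real work lies'') to an enumeration of primitive sublattices $M\cong U(2)\oplus E_8(2)\subset\Pic(X)$ modulo $\Aut(X)$ using Kondo's generators, but you give no mechanism for this, and the route demands precisely the control of $\Aut(X)$-orbits that is hard to obtain. The paper deliberately avoids it: by the Torelli theorem for Enriques surfaces together with Keum's and Ohashi's cover results, conjugacy classes of free involutions correspond to primitive embeddings $T_X\hookrightarrow N$ with $K=T_X^{\perp}$ free of $(-2)$-vectors, modulo Hodge isometries of $N$ --- no $\Aut(X)$-orbit computation enters the upper bound at all. Completeness then rests on three lattice facts you would need to supply or replace: (i) the no-$(-2)$-vector hypothesis forces $K\cong E_7(2)$ (Proposition \ref{e7}, a nontrivial discriminant-form argument finished with the Conway--Sloane tables); (ii) $O(E_7(2))\rightarrow O(q_K)$ is surjective (Lemma \ref{surj}); (iii) an orbit analysis on the finite form $u(2)^{\oplus m}\oplus\bracket{1/4}$ (Lemma \ref{orbit}) showing that the glue subgroup $\Gamma_T\subset A_T$ determines the embedding up to Hodge isometry, with exactly $10+6$ cyclic and $15$ Klein-four admissible choices, whence the bound $31=10+15+6$ of Proposition \ref{fin}. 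Without (i)--(iii) or a substitute, your proposal establishes neither the upper bound $31$ nor the splitting into $10+15+6$.
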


In \cite{kondo98}, Kondo proved that $\Aut (X)$ is generated by 
$32$ translations and switches,
$32$ projections and correlations,
$60$ HG involutions, 
and $192$ Keum's automorphisms.
One point of the proof was that $192$ Keum's automorphisms did not 
correspond in one-to-one way to 
the $192$ facets of the polyhedral cone introduced by
Borcherds and Kondo. Moreover they had infinite order while the others had order $2$.
In this respect, it can be expected that there exist $192$ {\em involutions} which 
correspond in one-to-one way to the $192$ facets of the polyhedral cone and
together with
the $32+32+60$ involutions they generate $\Aut (X)$. 
In fact, the HW involutions work well.
\begin{thm}\label{thm2}
$\Aut (X)$ is generated by the following {\em involutions}:
translations, switches, projections, correlations, HG involutions and HW involutions.
\end{thm}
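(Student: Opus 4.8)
The plan is to deduce Theorem~\ref{thm2} from Kondo's generation theorem, quoted in the excerpt, by showing that the $192$ Keum automorphisms can be eliminated in favour of the $192$ Hutchinson--Weber involutions. Set $G_0=\langle \text{translations, switches, projections, correlations, HG involutions}\rangle\subset\Aut(X)$ and $G=\langle G_0,\ \text{HW involutions}\rangle$. Since Kondo proved $\Aut(X)=\langle G_0,\ \text{Keum automorphisms}\rangle$ and $G\supseteq G_0$, it is enough to prove that every Keum automorphism lies in $G$; this forces $G=\Aut(X)$, which is exactly Theorem~\ref{thm2}. The geometric meaning behind the count is the Borcherds--Kondo polyhedral fundamental domain $D$ for the action of $\Aut(X)$ on the positive cone of $\Pic(X)$: its interior facets fall into the four families of the excerpt, and the $192$ facets of the last family are those whose side-pairing Kondo realized by the infinite-order Keum automorphisms. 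The obstruction to a clean bijection there is precisely that a Keum automorphism does not pair its facet \emph{involutively}; the aim is to reattach to each such facet a genuine involution coming from a Weber hexad.

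The concrete reduction I would use is the factorization statement: for each Keum automorphism $k$ there is an HW involution $h_W$ (associated with a Weber hexad $W$) and an element $g\in G_0$ with $k=h_W\cdot g$, equivalently $k\,h_W^{-1}\in G_0$. This is the correct form of the "one-to-one correspondence with the $192$ facets": the $192$ hexads, the $192$ HW involutions and the $192$ Keum automorphisms are put into bijection, and modulo $G_0$ (in practice, modulo the projections and correlations, together with reflections already available in $G_0$) the two families coincide. Establishing such an identity is a matter of comparing isometries of the rank-$17$ lattice $\Pic(X)$, so it reduces entirely to linear algebra once the relevant matrices are in hand.

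The computational core is therefore to write down the action of an HW involution $h_W$ on $\Pic(X)$ as an explicit isometry, using the geometry of the Weber hexad $W$ developed in the earlier sections, and to compare it with the matrix of the corresponding Keum automorphism taken from Kondo's work. To keep this feasible I would exploit the action of $G_0$ on the set of Weber hexads: since $G_0$ permutes the $192$ hexads with the same orbit structure it induces on the $192$ facets, and since by Theorem~\ref{thm1} the HW involutions fall into $6$ conjugacy classes, it suffices to verify the factorization $k=h_W\cdot g$ for one representative hexad in each of these orbits and then propagate the conclusion by the $G_0$-action. Running $W$ over all hexads then places every Keum automorphism in $G$, and the theorem follows.

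The main obstacle is this middle computation: producing the matrix of $h_W^{\ast}$ on $\Pic(X)$ from the Weber-hexad data and reconciling it, via the explicit coordinates of Kondo's cone and the combinatorics of Göpel tetrads and Weber hexads, with the prescribed Keum automorphism, so as to exhibit the required element $g\in G_0$. The delicate point is to make the bijection between the $192$ HW involutions and the $192$ Keum facets genuinely one-to-one rather than merely numerical; the orbit bookkeeping that collapses the $192$ verifications to the $6$ representatives is what renders the check tractable, but getting the lattice-theoretic normalizations to line up across all orbits is where the real work lies.
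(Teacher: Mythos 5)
There is a genuine gap, and it sits exactly where you locate the ``real work.'' Your entire reduction rests on the unproven factorization $k=h_W\cdot g$ with $g\in G_0$, and this claim is not merely unverified but of doubtful well-posedness: as the introduction of the paper stresses, Kondo's $192$ Keum automorphisms do \emph{not} correspond one-to-one to the $192$ facets of the Borcherds--Kondo cone $D'$ (this defect is precisely what motivates Theorem \ref{thm2}), so there is no canonical assignment of a Weber hexad $W$ to a given Keum automorphism $k$, and hence no well-defined identity to check by matrix comparison. Your orbit bookkeeping has a second problem: the $6$ classes of Theorem \ref{thm1} are conjugacy classes in all of $\Aut(X)$, whereas propagating a factorization $k=h_Wg$ requires conjugating by elements of $G_0$ and requires that $G_0$-conjugation permute the HW involutions and the Keum automorphisms compatibly with the (nonexistent) facet bijection; neither is established, and conjugation by projections or correlations is not shown to preserve the set of HW involutions at all. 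So even granting unlimited computation, your plan as stated does not yet have a precise statement to compute.

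The paper avoids comparing HW involutions with Keum automorphisms altogether. It reruns Kondo's fundamental-domain argument directly with the new set: let $N'$ be the group generated by the $16+16+16+16+60+192$ involutions, let $w'=2H-\sum N_\alpha/2$ be the projection of the Weyl vector, and given an ample-cone-preserving isometry $\phi$ choose $g\in N'$ minimizing $(g(\phi(w')),w')$. For walls of types $(0),(1),(2)$ the positivity $(r',g(v))>0$ is as in \cite{kondo98}; the only new input is that for each type-$(3)$ wall, corresponding to a Weber hexad $W$ with $4r'=3H-2\sum_{\alpha\in W}N_\alpha$, the explicit action of $\sigma_W$ on $NS(X)$ (Proposition \ref{neg} and its remark) yields $\sigma_W(w')=w'+8r'$, so minimality forces $(g(v),r')>0$ and $g(v)\in D'$; the theorem then follows as in \cite[Lemma 7.3]{kondo98}. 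In other words, one single lattice identity per wall type replaces your $192$ (or $6$) matrix reconciliations, and no bijection with Keum's automorphisms is ever needed. If you want to rescue your route, you would first have to \emph{define} the correspondence $k\leftrightarrow W$ (e.g.\ by which facet $k^{-1}$ maps $w'$ across) and then prove the factorization; at that point you would effectively be reconstructing the wall-crossing computation $\sigma_W(w')=w'+8r'$ anyway, so the paper's argument is both the shorter and the logically prior one.
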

This is a biproduct of the proof of Theorem \ref{thm1}.

The proof of Theorem \ref{thm1} is given in the following way. 
In Section \ref{method}
we introduce an invariant of a free involution, called {\em a patching subgroup},
which is a subgroup of $A_{NS(X)}=NS(X)^*/NS(X)$.
This subgroup appears naturally in the light of Nikulin's theory of lattices \cite{nikulin-sym}.
Under some condition,
we can show the invariance of the patching subgroup under conjugations.
Section \ref{invariants}, Proposition \ref{fin} shows conversely two free involutions
are conjugate if their patching subgroups are the same,
when $X$ is a Picard-general Jacobian Kummer surface. 
Simultaneously we see that $X$ has no more than $31$ non-isomorphic Enriques quotients.
These two Sections
reduce the proof of Theorem \ref{thm1} to concrete computations of patching subgroups 
of free involutions itemized above. 
The occurence of $31$ distinct patching subgroups shows Theorem \ref{thm1}.
The computations are worked out in 
Sections \ref{switch}-\ref{reye}.
The result shows that the generators of patching 
subgroups are expressed in terms of the classical notions.
It is summarized as follows.

In the switch case, let $\beta$ be an even theta characteristic
and $\sigma_{\beta}$ be the switch. $\beta$ corresponds to 
a pair of Rosenhain subgroups $R_1,R_2$. Then the patching 
subgroup $\Gamma_{\sigma_{\beta}}$ is cyclic of order $4$ and generated by
\[H/4 + \sum_{\alpha\in R_1} N_{\alpha}/2. \]
Of course we obtain the same group after replacing $R_1$ by $R_2$ in this case.

In the HG involution case, let $G$ be a G\"{o}pel tetrad and 
$\sigma_G$ be the HG involution. Then the patching subgroup $\Gamma_{\sigma_{G}}$
is $2$-elementary abelian of order $4$ and generated by 
\[H/2\text{ and }\sum_{\alpha\in G} N_{\alpha}/2. \]
We remark that this result of HG involution case also follows from the computations 
of \cite{mukai}.

In the HW involution case, let $W$ be a Weber hexad and 
$\sigma_W$ be the HW involution. Then the patching subgroup $\Gamma_{\sigma_{W}}$
is cyclic of order $4$ and generated by 
\[H/4 + \sum_{\alpha\in W} N_{\alpha}/2. \]
The divisors $H, N_{\alpha}\in NS(X)$ and 
also the classical notions appeared here will be defined in Section \ref{166},
where we recall the basic properties of Jacobian Kummer surfaces.
After fixing the basis of $A_{NS(X)}$, we can easily check that 
there appear $31$ distinct patching subgroups.

\medskip
{\ntt} We refer the readers to \cite{nikulin-sym} for the basic 
properties of the finite quadratic form $(A_L,b_L,q_L)$ associated with an even 
nondegenerate lattice $L$. By definition, $A_L$ is the finite abelian group $L^*/L$, 
$b_L: A_L\times A_L \rightarrow \bb{Q}/\bb{Z}$ is the symmetric bilinear form
and $q_L: A_L\rightarrow \bb{Q}/2\bb{Z}$ is the quadratic form, both naturally induced from 
that of $L$.
Usually we denote finite forms by $(A_L,q_L)$, omitting $b_L$, or only by $A_L$.

The hyperbolic plane is denoted by $U$, the root lattices $A_l,D_m,E_n$
are considered to be negative definite.
The rank one lattice $\bracket{2n}$ is also used in this paper.
On finite forms, $u(2)$ is the associated form of the lattice $U(2)$,
$\bracket{1/2n}$ is that of $\bracket{2n}$.
The set of generators $\{e,f\}$ of $u(2)$ satisfying 
\[q(e)=q(f)=0, b(e,f)=1/2\]
is called the standard generator.

For a lattice $T$ and $k=\bb{Q},\bb{C}$ we denote the scalar extension 
by $T_k$.
If $T$ is a lattice and $T_{\bb{C}}$ is equipped with a Hodge 
structure, then $\AutH (T)$ is 
a subgroup of $O(T)$ whose elements preserve the Hodge decomposition.

\section{The method of counting}\label{method}

In this section $X$ is any $K3$ surface.
Let $\sigma$ be a free involution on $X$.
The $(-1)$-eigenspace of the action of $\sigma$ on $NS(X)$ is denoted by $K$.
Then it is well-known that $K$ is negative definite, contains no $(-2)$-element 
and the primitive hull of $K\oplus T_X$ in $H^2(X,\bb{Z})$ is isometric to 
$U\oplus U(2)\oplus E_8(2)=:N$. We choose a marking $\phi: \overline{K\oplus T_X}\rightarrow N$
for this isometry.

The nonzero global holomorphic $2$-form $\omega_X$ on $X$ determines via $\phi$ a point 
in $\mca{D}(N)/O(N)$, which is the period of the Enriques surface $Y:=X/\sigma$, 
where 
\[ \mca{D}(N):=\{\bb{C}\omega \in \bb{P}(N_{\bb{C}})|
   \omega\in N\otimes \bb{C},\omega\cdot \omega=0,\omega\cdot\overline{\omega}>0\}\]
is the (two copies of)
bounded symmetric domain of type IV associated to lattice $N$ of 
signature $(2,10)$. Obviously this period is independent of the choice of $\phi$ and 
the Torelli theorem of Enriques surfaces says that 
this point determines the isomorphism class of $Y$ uniquely.

Conversely given a primitive embedding $\phi: T_X\rightarrow N$ such that
the orthogonal complement $K$ is free from $(-2)$-elements, 
by the surjectivity there exists an Enriques surface $Y$ 
whose period is exactly $[\phi(\bb{C}\omega_X)]$.
If $\rho (X)\ge 12$ then  
\cite[Theorem 1]{keum90} shows that $X$ is isomorphic to the universal double
cover of $Y$. Even if $\rho (X)\le 11$ the same holds,
whose proof is in \cite{ohashi07pre}. 

Thus we have shown
\begin{prop}\label{correspondence}
There is a one-to-one correspondence between the sets
\[\{{\mathrm{Enriques\ quotients\ of\ }}X\}/{\mathrm{(isomorphisms)}}\]
and 
\[\left\{
\begin{array}{l}
{\mathrm{Primitive\ embeddings\ }}\phi :T_X\rightarrow N\\
{\mathrm{such\ that\ }}K=T_X^{\perp}{\mathrm{contains\ no\ }}(-2){\mathrm{-elements}}
\end{array}
\right\}
\big/{\mathrm{(Hodge\ isometries\ of\ }}N{\mathrm{)}},\]
where for each $\phi$ we equip $N$ with a Hodge structure induced from that of $T_X$ 
by $\phi$.
\end{prop}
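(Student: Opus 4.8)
The plan is to construct the two maps of the claimed correspondence and verify that they are mutually inverse, assembling the period-theoretic facts recalled just above. First I would make the forward map explicit. Given an Enriques quotient $Y=X/\sigma$, the covering involution acts on $H^2(X,\bb{Z})$ as an isometry $\sigma^*$ with $\sigma^*\omega_X=-\omega_X$. Since the $(+1)$-eigenlattice is orthogonal to $\omega_X$, it is contained in $NS(X)$ by the Lefschetz theorem on $(1,1)$-classes, so $T_X$ lies in the $(-1)$-eigenspace; together with $K$ it fills out, to finite index, the anti-invariant lattice, whose primitive hull $\overline{K\oplus T_X}$ is identified with $N$ by the marking $\phi$. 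Composing the inclusion $T_X\hookrightarrow\overline{K\oplus T_X}$ with $\phi$ yields a primitive embedding $T_X\to N$ whose orthogonal complement is $\phi(K)$, which is free of $(-2)$-elements. I would then check this is well defined modulo Hodge isometries of $N$: replacing $\phi$ by another marking alters the embedding by an element of $O(N)$, which is a Hodge isometry for the induced structures because it matches the images of $\omega_X$; and an isomorphism $Y_1\cong Y_2$ lifts to an automorphism of $X$ conjugating $\sigma_1$ to $\sigma_2$ (as already noted in the introduction), whose action on $H^2$ carries one anti-invariant lattice onto the other compatibly with $T_X$ and $\omega_X$, hence descends to a Hodge isometry of $N$ intertwining the two embeddings.

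For the reverse map, I start from a primitive embedding $\phi:T_X\to N$ with $K=T_X^{\perp}$ free of $(-2)$-elements and transport the Hodge structure of $T_X$ to $N$. The resulting point $[\phi(\bb{C}\omega_X)]\in\mca{D}(N)$ avoids the $(-2)$-discriminant precisely because any $(-2)$-class of $N$ orthogonal to $\phi(\omega_X)$ would lie in $\phi(T_X)^{\perp}=\phi(K)$; this is exactly the locus on which the surjectivity of the Enriques period map produces an Enriques surface $Y$ realizing that period. The quoted theorems of Keum (for $\rho(X)\ge 12$) and of the author (for $\rho(X)\le 11$) then identify the $K3$-cover of $Y$ with $X$ itself, so that $Y$ is genuinely a quotient of the \emph{fixed} $X$. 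By the Torelli theorem for Enriques surfaces, $Y$ depends only on the $O(N)$-orbit of the period, hence only on the Hodge-isometry class of $\phi$, so the map is well defined.

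Finally I would verify that the two maps are inverse. Going $Y\mapsto\phi\mapsto Y'$, the surface $Y'$ is constructed to have period $[\phi(\bb{C}\omega_X)]$, which is the period of $Y$, so $Y'\cong Y$ by Enriques Torelli. Going $\phi\mapsto Y\mapsto\phi'$, the embedding $\phi'$ extracted from the covering involution of $Y$ realizes the same period and the same orthogonal complement $T_X^{\perp}$, so it agrees with $\phi$ up to a Hodge isometry of $N$. I expect the main obstacle to be not the lattice bookkeeping but the identification of the $K3$-cover of the abstractly constructed $Y$ with the \emph{given} surface $X$: the Enriques period map only pins down the cover as an abstract $K3$ carrying the prescribed transcendental data, and upgrading this to an honest isomorphism with $X$—thereby guaranteeing that the reverse map truly lands among quotients of $X$—is exactly the content of the cited covering theorems, on which bijectivity (as opposed to a mere pair of maps) rests.
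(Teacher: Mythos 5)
Your proposal is correct and follows essentially the same route as the paper: the forward map via the marked anti-invariant lattice $\overline{K\oplus T_X}\simeq N$, well-definedness and injectivity via the Torelli theorem for Enriques surfaces, and surjectivity via the surjectivity of the Enriques period map (the no-$(-2)$-elements condition keeping the period off the discriminant) combined with the cited covering theorems of Keum ($\rho(X)\ge 12$) and the author ($\rho(X)\le 11$). You have merely written out in more detail the steps the paper compresses into the two paragraphs preceding the proposition, and you correctly identify the identification of the $K3$-cover with the fixed $X$ as the crux carried by those cited theorems.
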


In the following, we identify $\overline{K\oplus T_X}$ with $N$ by $\phi$.
By \cite{nikulin-sym}, there are subgroups $\Gamma_K\subset A_K$ and $\Gamma_{T_X}\subset 
A_{T_X}$ and a sign-reversing isometry $\varphi: \Gamma_K \stackrel{\sim}{\rightarrow}
\Gamma_{T_X}$ such that $N$ is the sublattice of $K_{\bb{Q}}\oplus T_{X,\bb{Q}}$
generated by $K$, $T_X$ and $\{(x,\varphi(x))|x\in \Gamma_K\}$.
\begin{dfn}\label{df}
The {\em{patching subgroup}} $\Gamma_{\sigma}$ of the free involution $\sigma$
is the inverse image of $\Gamma_{T_X}$ by the natural sign-reversing isometry 
$A_{NS(X)}\stackrel{\sim}{\rightarrow} A_{T_X}$.
\end{dfn}

Under a condition, $\Gamma_{\sigma}$ is an invariant of a conjugacy class
which is very computable.
\begin{prop}\label{invariance}
If $\AutH (T_X)=\{\pm \id\}$, then $\Gamma_{\sigma}$ depends only on the 
isomorphism class of the quotient Enriques surface.
\end{prop}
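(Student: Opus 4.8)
The plan is to reduce the statement to the conjugation-invariance of $\Gamma_{\sigma}$ and then to exploit the hypothesis to see that conjugation acts trivially on it. First I would observe that $\Gamma_{\sigma}$ is intrinsic to $\sigma$: the sublattices $K$ (the $(-1)$-eigenspace) and $T_X$ of $H^2(X,\bb{Z})$, together with the way the overlattice $\overline{K\oplus T_X}$ sits in $K_{\bb{Q}}\oplus T_{X,\bb{Q}}$, are defined without reference to the marking $\phi$; hence the subgroup $\Gamma_{T_X}\subset A_{T_X}$, and therefore $\Gamma_{\sigma}\subset A_{NS(X)}$, depend on $\sigma$ alone. By the Torelli-type statement recalled in the introduction, two free involutions $\sigma,\sigma'$ yield isomorphic Enriques quotients if and only if they are conjugate in $\Aut (X)$. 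Thus it suffices to prove that $\Gamma_{g\sigma g^{-1}}=\Gamma_{\sigma}$ for every $g\in\Aut (X)$.

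Next I would analyze the action of such a $g$. Every $g\in\Aut (X)$ acts on $H^2(X,\bb{Z})$ as a Hodge isometry preserving both $NS(X)$ and $T_X$; in particular its restriction lies in $\AutH (T_X)$, so the hypothesis forces $g|_{T_X}=\pm\id$. Writing $\sigma'=g\sigma g^{-1}$, the $(-1)$-eigenspace of $\sigma'$ on $NS(X)$ is $K'=g(K)$, and since $g(T_X)=T_X$ we get $\overline{K'\oplus T_X}=g(\overline{K\oplus T_X})$. Hence $g$ carries the entire gluing configuration attached to $\sigma$ onto the one attached to $\sigma'$.

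The third step is to track the gluing data through $g$. Following \cite{nikulin-sym}, the overlattice $\overline{K\oplus T_X}$ corresponds to an isotropic subgroup $H\subset A_K\oplus A_{T_X}$, namely the graph of $\varphi:\Gamma_K\stackrel{\sim}{\rightarrow}\Gamma_{T_X}$, and $\Gamma_{T_X}=p_{T_X}(H)$ is its second projection. Applying $g$, which restricts to an isometry $K\rightarrow K'$ and to $g|_{T_X}=\pm\id$ on $T_X$, sends $H$ to the corresponding isotropic subgroup $H'=\bigl((g|_K)_*\oplus (g|_{T_X})_*\bigr)(H)$ attached to $\sigma'$. Since projection to the $T_X$-factor commutes with this direct-sum map,
\[ \Gamma_{T_X}'=p_{T_X}(H')=(g|_{T_X})_*\bigl(p_{T_X}(H)\bigr)=(g|_{T_X})_*(\Gamma_{T_X}). \]
Now $g|_{T_X}=\pm\id$ induces $\pm\id$ on $A_{T_X}$, and $-\id$ stabilizes every subgroup; hence $\Gamma_{T_X}'=\Gamma_{T_X}$ as subgroups of the same group $A_{T_X}$. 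Transporting back through the canonical sign-reversing isometry $A_{NS(X)}\stackrel{\sim}{\rightarrow}A_{T_X}$, which is fixed and independent of $\sigma$, yields $\Gamma_{\sigma'}=\Gamma_{\sigma}$.

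The main obstacle is the bookkeeping in the third step: one must verify that conjugation by $g$ really does carry the isotropic gluing subgroup of $\sigma$ to that of $\sigma'$ via the induced map $(g|_K)_*\oplus (g|_{T_X})_*$, and that both $\Gamma_{T_X}$ and $\Gamma_{T_X}'$ are being compared inside the one discriminant group of the abstract lattice $T_X$. Once this functoriality is in place, the conceptual mechanism is simply that the hypothesis $\AutH (T_X)=\{\pm\id\}$ collapses the transcendental action to $\pm\id$, whose action on $A_{T_X}$ fixes every subgroup. I expect the hypothesis to be genuinely necessary: with a larger $\AutH (T_X)$ the orbit of $\Gamma_{T_X}$ could contain several distinct subgroups all realizing the same Enriques surface.
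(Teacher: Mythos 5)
Your proof is correct, and its engine is exactly the paper's: the relevant isometry restricts to $\pm\id$ on $T_X$ by the hypothesis $\AutH(T_X)=\{\pm\id\}$, and $\pm\id$ induces $\pm\id$ on $A_{T_X}$, which stabilizes every subgroup. The difference is in how you arrive there. The paper invokes Proposition \ref{correspondence}: isomorphic Enriques quotients induce Hodge-isometric structures on the abstract lattice $N$, and any Hodge isometry of $N$ preserves $K=\omega_X^{\perp}$, hence $T_X$, hence (acting as $\pm\id$ there) the subgroup $\Gamma_{T_X}$. You instead use the fact recalled in the introduction that isomorphic quotients correspond to involutions conjugate in $\Aut(X)$ --- note this is a covering-space lifting argument, not a Torelli-type statement as you label it, though the fact itself is available and correct --- and then transport Nikulin's gluing data through the conjugating automorphism inside $H^2(X,\bb{Z})$. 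This makes your route more elementary in its inputs: it does not depend on the Torelli theorem for Enriques surfaces or on the results of \cite{keum90} and \cite{ohashi07pre} that go into Proposition \ref{correspondence}. What the paper's formulation buys is uniformity with Section \ref{invariants}, where the converse $(*)$ is proved by constructing abstract Hodge isometries of $N$ that need not come from $\Aut(X)$; working at the level of Hodge isometries of $N$ is the natural framework for both directions at once. Your explicit check that the isotropic subgroup of $A_{K}\oplus A_{T_X}$ is carried onto its counterpart by $(g|_K)_*\oplus(g|_{T_X})_*$, with both copies of $\Gamma_{T_X}$ compared inside the single discriminant group $A_{T_X}$ and then pulled back through the fixed canonical isometry $A_{NS(X)}\stackrel{\sim}{\rightarrow}A_{T_X}$, is precisely the functoriality the paper compresses into the phrase ``preserves the subgroup $\Gamma_{T_X}$''; it is sound as you state it, as is your preliminary observation that $\Gamma_{\sigma}$ is marking-independent.
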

\begin{proof}
By Proposition \ref{correspondence}, conjugate free involutions induce on $N$
an isometric Hodge structure.
Any Hodge isometry of $N$ preserves $K=\omega_X^{\perp}$ and
hence $T_X$. Thus it induces $\pm \id$ on $T_X$ and preserves the subgroup $\Gamma_{T_X}$.
\end{proof}

\begin{rem}
The condition above is weak. It is true if $\rho (X)$ is odd, see \cite[p597]{kondo98},
or even if $\rho (X)$ is even, it is true if 
$X$ is very general in the period domain (\cite[Proposition 3.1]{ohashi}).
\end{rem}

In general there are free involutions not conjugate each other but with 
the same $\Gamma_{\sigma}$. However in the Picard-general Jacobian Kummer case, 
$\Gamma_{\sigma}$ completely classifies free involutions. 
This will be shown in the next section.

The computation of $\Gamma_{\sigma}$ is done by
\begin{lem}\label{gamma}
Let $\sigma ,K$ as above. Then 
\[\Gamma_{\sigma}=\{[x]\in NS(X)^*/NS(X)|\exists [y]\in K^*/K,\ x-y\in NS(X)\}.\]
\end{lem}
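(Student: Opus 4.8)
The plan is to unwind both sides of the claimed equality inside the unimodular lattice $H^2(X,\bb{Z})$, using the two standard consequences of Nikulin's gluing theory \cite{nikulin-sym} that already underlie Definition \ref{df}. First I would record the two ingredients explicitly. On one hand, since $NS(X)$ and $T_X$ are mutually orthogonal primitive sublattices whose direct sum has finite index in the unimodular lattice $H^2(X,\bb{Z})$, the natural sign-reversing isometry $\gamma_X\colon A_{NS(X)}\stackrel{\sim}{\to}A_{T_X}$ is described concretely by $\gamma_X([x])=[t]$ whenever $x\in NS(X)^*$, $t\in T_X^*$ and $x+t\in H^2(X,\bb{Z})$. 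On the other hand, the glue group $\Gamma=N/(K\oplus T_X)\subset A_K\oplus A_{T_X}$ has $\Gamma_{T_X}$ as its image under the projection to the second factor; because $N$ is the primitive hull of $K\oplus T_X$ in $H^2(X,\bb{Z})$, this translates into
\[\Gamma_{T_X}=\{[t]\in A_{T_X}\mid \exists\, k\in K^*,\ k+t\in H^2(X,\bb{Z})\}.\]

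With these two descriptions in hand, I would prove the equality by a direct chain of equivalences. By definition $[x]\in\Gamma_{\sigma}$ means $\gamma_X([x])\in\Gamma_{T_X}$; fixing $t\in T_X^*$ with $x+t\in H^2(X,\bb{Z})$, this says precisely that there is some $k\in K^*$ with $k+t\in H^2(X,\bb{Z})$. For the forward inclusion I would then form the difference $(x+t)-(k+t)=x-k$ of two integral classes. The essential point is that $K$ is contained in $NS(X)$, so $k\in K^*\subset K_{\bb{Q}}\subset NS(X)_{\bb{Q}}$ and hence $x-k\in NS(X)_{\bb{Q}}\cap H^2(X,\bb{Z})=NS(X)$ by primitivity of $NS(X)$; taking $y=k$ gives $x-y\in NS(X)$, which is the right-hand condition. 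Conversely, given $y\in K^*$ with $x-y\in NS(X)$, I would write $y+t=(x+t)-(x-y)$; both summands lie in $H^2(X,\bb{Z})$, so $y+t\in H^2(X,\bb{Z})$ with $y\in K^*$, and the description of $\Gamma_{T_X}$ above yields $[t]=\gamma_X([x])\in\Gamma_{T_X}$, that is $[x]\in\Gamma_{\sigma}$.

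The only real care required --- and the step I expect to be the main obstacle --- is the bookkeeping of which dual lattice each representative inhabits, together with the repeated use of unimodularity and primitivity to pass between the rational and integral pictures. Concretely, one must check that the various representatives can be chosen in $NS(X)^*$, $K^*$ and $T_X^*$ simultaneously, so that $k+t$ and $x+t$ genuinely lie in the spans $K_{\bb{Q}}\oplus (T_X)_{\bb{Q}}$ and $NS(X)_{\bb{Q}}\oplus (T_X)_{\bb{Q}}$ respectively; that the resulting classes in the discriminant groups do not depend on these choices; and that the identity $NS(X)_{\bb{Q}}\cap H^2(X,\bb{Z})=NS(X)$ is invoked only for elements already known to lie in $NS(X)_{\bb{Q}}$. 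This last point is exactly where the inclusion $K\subset NS(X)$ enters and makes the argument close up, since it is what forces the difference $x-k$ to be a rational Néron--Severi class before primitivity is applied.
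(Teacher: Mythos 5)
Your proof is correct and follows essentially the same route as the paper's: both characterize the canonical isomorphism $A_{NS(X)}\stackrel{\sim}{\to}A_{T_X}$ by $x+t\in H^2(X,\bb{Z})$ and describe $\Gamma_{T_X}$ via the glue group of the primitive hull $N$ of $K\oplus T_X$, then conclude by the same difference argument $x-y=(x+t)-(y+t)$. The only difference is that you spell out the bookkeeping (well-definedness on discriminant classes, $K^*\subset NS(X)_{\bb{Q}}$, and the primitivity identities $NS(X)_{\bb{Q}}\cap H^2(X,\bb{Z})=NS(X)$ and $(K\oplus T_X)_{\bb{Q}}\cap H^2(X,\bb{Z})=N$) that the paper's terse chain of equalities leaves implicit.
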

\begin{proof}
Let $\rho : A_{NS(X)}\rightarrow A_{T_X}$ be the canonical isomorphism.
Then $\rho ([x])=[z]$ is equivalent to $x+z\in H^2(X,\bb{Z})$.
Since 
$\Gamma_{T_X}=\{[z]\in T_X^*/T_X|\exists [y]\in K^*/K,\ y+z\in N\}$,
\begin{eqnarray*}
\Gamma_{\sigma} &=& \{ [x]\in NS(X)^*/NS(X)| \rho ([x])\in \Gamma_{T_X}\}\\
\ &=& \{[x]\in NS(X)^*/NS(X)|\exists [y]\in K^*/K,\ x-y\in NS(X)\}.
\end{eqnarray*}
This is what we need.
\end{proof}

\section{Invariants of free involutions}\label{invariants}

Let $C$ be a genus $2$ curve, $J(C)$ its Jacobian and $Km(J(C))=X$ 
the associated Jacobian Kummer surface as in the Introduction.
As is well-known,
$J(C)$ contains $C$ as a theta divisor:
\[\Theta=\{[p-p_0] | p\in C \}\subset J(C), \quad p_0\in C.\]
Hence $\rank NS(J(C))\ge 1$ and $\rank NS(X)\ge 17$
holds. When we have the equality, we call $X$ {\em{Picard-general}}.
In this case, since $T_{J(C)}=U^{\oplus 2}\oplus \bracket{-2}$ we have
$T_X=U(2)^{\oplus 2}\oplus \bracket{-4}$
and $NS(X)=U\oplus D_4^{\oplus 2}\oplus D_7$.

For simplicity, we put $T:=T_X$. 
Suppose we are given a primitive embedding of $T$ into 
$N$ such that the orthogonal complement is free from $(-2)$-elements, 
as in Proposition \ref{correspondence}.
First we determine the orthogonal complement.
\begin{prop}\label{e7}
The lattice $K=T_X^{\perp}$ is isometric to $E_7(2)$.
\end{prop}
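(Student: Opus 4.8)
The plan is to determine $K$ from three invariants---its rank, its signature, and its discriminant form---and then to identify the resulting genus with that of $E_7(2)$, using the hypothesis that $K$ has no $(-2)$-elements to single out $E_7(2)$ among the classes of that genus. First I would settle rank and signature: since $N$ has signature $(2,10)$ and $T=U(2)^{\oplus 2}\oplus\bracket{-4}$ has signature $(2,3)$, the orthogonal complement $K$ is negative definite of rank $7$. As $E_7(2)$ is negative definite of rank $7$ with minimal norm $-4$ (hence without $(-2)$-elements), it is the natural candidate; here I would record $A_{E_7(2)}\cong \bb{Z}/4\oplus(\bb{Z}/2)^{\oplus 6}$ together with its quadratic form.

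The central step is to compute $q_K$ through Nikulin's gluing calculus. Writing $H=N/(T\oplus K)$ for the glue group, $H$ is an isotropic subgroup of $A_T\oplus A_K$ whose projections $H_T\subseteq A_T$ and $H_K\subseteq A_K$ are identified by a sign-reversing isometry, and $q_N\cong (q_T\oplus q_K)|_{H^\perp}/H$. The two outer forms are known: $q_T=u(2)^{\oplus 2}\oplus\bracket{-1/4}$ and $q_N\cong u(2)^{\oplus 5}$. The key observation is that $q_N$ is $2$-elementary whereas $q_T$ carries the order-$4$ summand $\bracket{-1/4}$: for the $4$-torsion to disappear in $H^\perp/H$, the projection $H_T$ must contain the generator of that $\bb{Z}/4$, which forces $A_K$ to contain a matching element of order $4$ and pins $|H|=4$. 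Feeding this back through $|A_N|=|A_T|\,|A_K|/|H|^2$ yields $|A_K|=2^8$, and tracing the induced form identifies $q_K$ with $q_{E_7(2)}$. Thus $K$ lies in the genus of $E_7(2)$.

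To finish I would upgrade ``same genus'' to ``isometric.'' If the genus is of class number one this is immediate; otherwise I would invoke the hypothesis: among the even negative definite rank-$7$ lattices in this genus, $E_7(2)$ is the one with no $(-2)$-vectors, and since $K$ has none by assumption, $K\cong E_7(2)$. I expect this last step to be the main obstacle, since it requires either a class-number statement for the genus of $E_7(2)$ or a direct argument ruling out the remaining classes by their roots.

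As a concrete route for the existence half, which also makes the identification transparent, I would build the embedding in stages inside $N=U\oplus U(2)\oplus E_8(2)$: send $\bracket{-4}$ to $\bb{Z}\alpha\subseteq E_8(2)$ for a root $\alpha$ of $E_8$, so that its complement in $E_8(2)$ is $E_7(2)$, and send one $U(2)$ onto the $U(2)$ summand; the complement so far is $U\oplus E_7(2)$. It then remains to embed the last $U(2)$ into $U\oplus E_7(2)$ with complement $E_7(2)$, which I would do with two adjacent roots $r,r'$ of $E_7$ (so $(r,r')=-2$ in $E_7(2)$) via $u=e+2f+r$ and $v=e+2f+r'$, giving $u^2=v^2=0$ and $(u,v)=2$; a direct computation then exhibits the orthogonal complement as an index-$2$ overlattice of $\bracket{-4}\oplus D_6(2)$, i.e.\ as $E_7(2)$.
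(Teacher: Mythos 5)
Your overall frame (rank and signature, Nikulin gluing, genus plus the no-roots hypothesis) is reasonable, and your gluing is set up directly between $T$ and $K$ inside $N$, whereas the paper glues $M=N^{\perp}\simeq U(2)\oplus E_8(2)$ with $T$ inside the K3 lattice $L$ and uses $-q_K\simeq(q_M\oplus q_T)|_{\Gamma^{\perp}}/\Gamma$; these two settings carry the same information, so the framing is not the problem. The problem is your ``key observation'': it is false that $2$-elementarity of $q_N\cong u(2)^{\oplus 5}$ forces $H_T$ to contain the order-$4$ generator $g'$ of $\bracket{-1/4}$, and $|H|=4$ is \emph{not} pinned by discriminant arithmetic alone. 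What $2$-elementarity of $H^{\perp}/H$ actually forces is only that the order-\emph{two} element $2g'$ be glued. Concretely, the order-$2$ glue $H=\bracket{(2g',\kappa)}$ with $\kappa\in A_K$ of order $2$, $q_K(\kappa)=1$, already kills all $4$-torsion: any $x\in H^{\perp}$ whose $g'$-coefficient is odd must have odd $\kappa$-side coefficient too, so $2x=(2g',2g_K)\in H$. This case has $|A_K|=2^6$ and $q_K\cong u(2)^{\oplus 2}\oplus\bracket{1/4}$, it passes every finite-quadratic-form test, and it is realized by genuine even negative definite rank-$7$ lattices (the paper shows any such $K$ sits inside $D_7$ with index $4$). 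It can therefore only be excluded by the hypothesis that $K$ has no $(-2)$-vectors, and that is exactly where the paper's proof does its real work: its two numerically allowed glue sizes are $a=4,5$ (equivalently $|H|=4,2$), the upper bound on the glue coming from $l_2(A_K)\le\rank K=7$ --- a bound you would also need, and do not supply, to cap $|H|$ from above even granting your order-$4$ claim --- and the $|H|=2$ case is eliminated by a pigeonhole: in $D_7$ take an $A_6$-chain and the seven elements $f_0,\dots,f_6$, any two of which differ by a $(-2)$-vector, which cannot all avoid distinct classes of $K'/K$ when $[K':K]=4$. Your proof invokes the no-roots hypothesis only to select $E_7(2)$ inside the determinant-$2^8$ genus, which is too late: in the surviving $|H|=4$ case that genus step is the easy part (your rescaling remark is essentially the paper's finish: $K(1/2)$ is integral of determinant $-2$ with minimal norm $\ge 2$, hence $E_7$ by Conway--Sloane Table 15.8), while the determinant-$2^6$ possibility is never confronted.

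Two smaller points. First, even where $|H|=4$ does hold, $H_T$ need not contain an order-$4$ element: the paper's Section 3 classification exhibits glues with $\Gamma_T\cong(\bb{Z}/2)^2$ containing $2g'$ but no order-$4$ element (these are realized by the Hutchinson--G\"{o}pel involutions), so your forcing mechanism is wrong even where its numerical conclusion happens to be right. Second, your explicit staged embedding of $T$ into $N$ with complement $E_7(2)$ at the end is a sound and pleasant construction, but it proves existence of \emph{one} embedding with complement $E_7(2)$; the proposition asserts that \emph{every} orthogonal complement free of $(-2)$-vectors is $E_7(2)$, so the construction cannot substitute for the missing case analysis.
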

\begin{proof}
Consider the unique embedding of $N$ into the abstract $K3$ lattice $L$. 
The orthogonal complement is denoted by $M$, $M\simeq U(2)\oplus E_8(2)$.
By \cite{nikulin-sym}, we have the following isomorphism of discriminant 
quadratic forms:
\begin{equation}
-q_K \simeq (q_M\oplus q_T|_{\Gamma^{\perp}})/\Gamma\label{nikulin}
\end{equation}
where $\Gamma$ is the pushout (i.e. the graph) of a sign-reversing isometry of subgroups 
$\Gamma_M\subset A_M$ and $\Gamma_T\subset A_T$.

For a finite quadratic form $(A,q)$, we denote the quadratic form induced on the 
$2$-torsion subgroup $A_2=\{x\in A| 2x=0\}$ by $(A_2,q_2)$. 
Note that even if $q$ is nondegenerate, $q_2$ may be 
degenerate.

In our equality (\ref{nikulin}), 
$A_M$ is $2$-elementary, hence $\Gamma$ is $2$-elementary and $\Gamma_T$ is
contained in $(A_T)_2$. Put $\#\Gamma=2^a$. This shows $a \le 5=l_2(A_T)$, where $l_2$ denotes 
the number of minimal generators of the $2$-Sylow subgroup of $A_T$. 

 Also it follows 
\begin{equation}\label{nikulin2}
((A_M\oplus A_T)_2|_{\Gamma^{\perp}})/\Gamma 
 \subset (A_M\oplus A_T|_{\Gamma^{\perp}})/\Gamma=A_K,
\end{equation}
since $\Gamma$ is $2$-elementary.
$(A_M\oplus A_T)_2$ has a radical of order $2$ contained in 
$(A_T)_2$. Since $\Gamma$ is a graph, this radical is not contained 
in $\Gamma$. This shows that 
$\# ((A_M\oplus A_T)_2|_{\Gamma^{\perp}})=2^{15-a}$.
Thus the order of the left-hand-side of (\ref{nikulin2}) is $2^{15-2a}$.
Since $K$ is of rank $7$, we have $15-2a\le 7$ and hence $a=4,5$.

We show that if $a=5$ then $K$ contains $(-2)$-elements and contradicts 
the assumption.
For this, first note that in this case
$\Gamma_T=(A_T)_2$ is uniquely determined 
and the embedding of $\Gamma_T$ in $A_M\simeq u(2)^{\oplus 5}$ is unique up to isomorphism by 
Witt's theorem. So we can compute $q_K$ directly and 
get $q_K\simeq u(2)^{\oplus 2}\oplus \bracket{1/4}$.
From this we see that there are inclusions 
$K\subset K' \subset \bracket{-1}^{\oplus 7}$ 
such that $K'$ is an even lattice, $[K':K]=4$ and $[\bracket{-1}^{\oplus 7}: K']=2$.
By the definition of $D_7$, $K'\simeq D_7$. 
Consider the Dynkin diagram of $D_7$ and take a subgraph isomorphic to $A_6$
with vertices $e_1,\cdots e_6$ in this order. Put $f_0=0, f_j=e_1+\cdots+e_j$, $1\le j \le 6$.
Any difference of two of these seven elements have self-intersection $(-2)$.
If $K$ has no $(-2)$-elements, then $\{f_j\}_{0\le j\le 6}$ cannot be in 
the same residue class of $K'/K$. 
Then we must have $[K':K]\ge 7$ and contradiction.

Thus we obtain $a=4$. From (\ref{nikulin2}), we see that 
$\#(A_K)_2\ge 2^7$. It follows that $K(1/2)$ is an integral 
(may be odd) lattice and $\det K(1/2)=-2$. By assumption, the minimal norm 
of the positive definite lattice $K(-1/2)$ is greater than $1$.
It follows from \cite[p400, Table 15.8]{conway-sloane} that $K(1/2)\simeq 
(\bracket{-2}^{\perp}\ \mathrm{in}\ E_8)\simeq E_7$.
\end{proof}

The following nature of the lattice $K=E_7(2)$ will be used.
\begin{lem}\label{surj}
The canonical homomorphism $\sigma: O(K)\rightarrow O(q_K)$ is surjective.
\end{lem}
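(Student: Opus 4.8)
The plan is to exploit the fact that rescaling does not change the isometry group, so that $O(K)=O(E_7(2))=O(E_7)$, together with the explicit description of this group by reflections. First I would record the structure of the discriminant form. Writing $L=E_7$ (even, of determinant $2$), we have $K=L(2)$, hence $K^*=\tfrac12 L^*$ and $A_K=\tfrac12 L^*/L$, a group of order $2^8$. Since $2L^*\subset L$, one checks that the $2$-torsion subgroup is $(A_K)_2=\tfrac12 L/L\cong L/2L=:V$, a $7$-dimensional $\bb{F}_2$-space, while the full group is $\cong \bb{Z}/4\oplus(\bb{Z}/2)^6$; the order-$4$ elements come from a generator of $L^*/L$. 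The bilinear form $b_K$ restricts on $V$ to the reduction mod $2$ of the $E_7$-form, which is alternating (as $E_7$ is even) and degenerate with a $1$-dimensional radical $R=2L^*/2L$ (because $\det L=2$), so that $V/R$ is a $6$-dimensional nondegenerate symplectic space over $\bb{F}_2$.

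Next I would compute the action of $O(E_7)$ on $A_K$ through generators. Since $O(E_7)=W(E_7)$ (the Dynkin diagram of $E_7$ has no symmetries and $-\id\in W(E_7)$), the group is generated by the reflections $s_r$ in the roots $r$ of $E_7$, so it suffices to describe the image of each $s_r$. A direct computation with $s_r(x)=x-\tfrac{(x,r)_K}{2}\,r$ shows that on $A_K$ the reflection $s_r$ induces the transvection
\[ t_{[r/2]}\colon\ [x]\longmapsto [x]+\big(2\,b_K([x],[r/2])\big)\,[r/2] \]
determined by the class $[r/2]\in V$. Thus every root reflection acts on $A_K$ as a transvection supported on the $2$-torsion, in the direction $[r/2]$.

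The core of the argument is then to identify $O(q_K)$ and to see that these transvections fill it. Any isometry of $q_K$ preserves the characteristic subgroups $(A_K)_2=V$ and its radical $R$, hence descends to the symplectic space $V/R$. I would show that $O(q_K)$ is generated by the transvections along non-radical vectors of $V$ and that the classes $[r/2]$ of roots realize all of these: the roots span $L$, so their images $[r/2]$ generate $V$ and surject onto $V/R$. Here I would invoke the classical isomorphism $W(E_7)/\{\pm\id\}\cong Sp_6(\bb{F}_2)$, which identifies the image of $W(E_7)$ on $V/R$ with the full symplectic group; the transvections it produces then generate $O(q_K)$, giving surjectivity.

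I expect the main obstacle to be the precise determination of $O(q_K)$ --- in particular pinning down its order and confirming that the transvections coming from roots generate the \emph{whole} group rather than a proper subgroup. The subtle points are the interaction between the order-$4$ elements of $A_K$ and the radical $R$ of $V$ (an isometry of $q_K$ is not completely determined by its action on $V/R$), and checking that the index of $\mathrm{Im}\,O(E_7)$ in $O(q_K)$ equals $1$. For this last step the explicit matching $W(E_7)/\{\pm\id\}\cong Sp_6(\bb{F}_2)$, together with a direct count of the order of $O(q_K)$, should be the decisive ingredients.
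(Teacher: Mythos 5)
Your route is genuinely different from the paper's, and it is viable. The paper also begins with the order count $\#O(E_7(2))=\#O(E_7)=2^{10}\cdot 3^4\cdot 5\cdot 7=\#O(q_K)$ (via Bourbaki and its Lemma 3.4 on orbits of $u(2)^{\oplus m}\oplus\bracket{1/4}$), but then reduces surjectivity to \emph{injectivity}: it embeds $K=E_7(2)$ as $r^\perp$ for a $(-4)$-vector $r\in E_8(2)$, extends any $g\in\ker\sigma$ to $\overline{g}\in O(E_8(2))$ with $\overline{g}(r)=r$, and invokes Barth--Peters (the analogous statement for $E_8(2)$: an isometry acting trivially on the discriminant is $\pm\id$) to conclude $\overline{g}=\id$. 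You instead attack surjectivity head-on, using $O(E_7)=W(E_7)$, the observation that each root reflection $s_r$ induces the transvection $t_{[r/2]}$ on $A_K$, and the classical isomorphism $W(E_7)\cong\{\pm\id\}\times Sp_6(\bb{F}_2)$. What the paper's detour through $E_8(2)$ buys is that all the delicate group theory is outsourced to one citation; what your approach buys is self-containedness and an explicit description of how $O(q_K)$ is generated, at the cost of the bookkeeping around the radical and the order-$4$ part that you correctly flag.

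That flagged step is the one genuine gap, and it does need to be closed, since surjecting onto $Sp(V/R)\cong Sp_6(\bb{F}_2)$ only pins down the image of $W(E_7)$ in $O(q_K)$ up to index $2$ (note $\#O(q_K)=2\cdot\#Sp_6(\bb{F}_2)$). It can be closed as follows: the subgroup $I\subset O(q_K)$ generated by your transvections contains $-\id_{A_K}$, because $E_7$ contains seven pairwise orthogonal roots $r_1,\dots,r_7$ whose reflections compose to $-\id_K$, so $-\id_{A_K}=t_{[r_1/2]}\cdots t_{[r_7/2]}\in I$; moreover $-\id_{A_K}$ is nontrivial (it acts by $-1$ on the order-$4$ summand $\bracket{1/4}$) and acts trivially on the $2$-torsion $V$, hence lies in $\ker\bigl(O(q_K)\to Sp(V/R)\bigr)$. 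Therefore $\#I\ge 2\cdot\#Sp_6(\bb{F}_2)=2^{10}\cdot 3^4\cdot 5\cdot 7=\#O(q_K)$, forcing $I=O(q_K)$. (This still uses the order count of $O(q_K)$, which both you and the paper defer; the paper relegates it to the unproved Lemma 3.4.) One small correction to your setup: the transvection $t_v$, $v\in V$, preserves the \emph{quadratic} form $q_K$ only when $q_K(v)=1\in\bb{Q}/2\bb{Z}$, not for arbitrary non-radical $v$ --- since $q_K(x+v)=q_K(x)+q_K(v)+2b_K(x,v)$ --- so you should say ``transvections along norm-$1$ vectors''; the root classes $[r/2]$ do qualify, as $q_K(r/2)\equiv 1$, and they exhaust the $63$ non-radical norm-$1$ classes, exactly one in each nonzero coset of $V/R$ because $q_K(v+\rho)=q_K(v)+1$ for the radical generator $\rho$.
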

\begin{proof}
The same property for the lattice $E_8(2)$ is known by \cite{barth-peters}.
We reduce the lemma to this case.
Firstly, we know the orders of the two groups.
By \cite{bourbaki}, $\# O(E_7(2))= \# O(E_7)= 2^{10}\cdot 3^4 \cdot5 \cdot 7$.
On the other hand, we can easily compute the order of $O(q_K)$ 
as $\# O(q_K)=\# O(u(2)^{\oplus 2}\oplus \bracket{1/4})= 2^{10}\cdot 3^4 \cdot5 \cdot 7$
(c.f. Lemma \ref{orbit}).
Thus it is enough to show that $\sigma$ is injective.

We take a $(-4)$-element $r$ of $E_8(2)$ and identify $K$ with $r^{\perp}$.
Obviously $g\in \ker \sigma$ can be extended to an isometry $\overline{g}$ of $E_8(2)$
by defining $\overline{g}(r)=r$. It is clear that $\overline{g}$ acts on 
the discriminant $A_{E_8(2)}$ trivially. It follows from \cite[Proposition 1.7]{barth-peters}
that $\overline{g}=\pm \id.$ Since $\overline{g}(r)=r$, $\overline{g}= \id$.
\end{proof}

Because $(A_K,q_K)\simeq u(2)^{\oplus 3}\oplus \bracket{1/4}$, the next
lemma is also used.
\begin{lem}\label{orbit}
Let $(A,q)=u(2)^m \oplus \bracket{1/4}$ be a finite quadratic form.
Then the action of $O(q)$ on $A$ decomposes $A$ into $6$ orbits.
If we denote the standard generator of one of $u(2)$ by $\{e,f\}$ and 
that of $\bracket{1/4}$ by $\{g\}$,
they are as in the following table.
\begin{center}
\begin{tabular}{lll} \hline
a representative  & length & square\\ \hline
$0$ & $1$ & $0$ \\
$2g$ & $1$ & $1$ \\
$e$ & $2^{2m}-1$ & $0$ \\
$e+f$ & $2^{2m}-1$ & $1$ \\
$g$ & $2^{2m}+2^m$ & $1/4$ \\
$e+f+g$ & $2^{2m}-2^m$ & $-3/4$ \\ \hline
\end{tabular}
\end{center}
In this table, for a representative $x$, the length is $\# (O(q)\cdot x)$ and the square 
is $q(x)\in \bb{Q}/2\bb{Z}$.
\end{lem}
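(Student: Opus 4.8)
The plan is to reduce everything to an explicit description of $q$ and then to a transitivity statement for quadratic forms over $\bb{F}_2$. Writing a general element as $x=v+cg$ with $v\in u(2)^m$ and $c\in\bb{Z}/4$, the first step records the formula $q(x)=q_u(v)+c^2/4\in\bb{Q}/2\bb{Z}$, where $q_u(v)\in\{0,1\}$ is the Arf-type $\bb{F}_2$-valued form carried by $u(2)^m$ and $b(g,v)=0$ because $\bracket{1/4}$ is orthogonal to $u(2)^m$. In particular $q$ takes only the values $0,1$ on the subgroup $A_2$ of elements of order dividing $2$ (those with $c\in\{0,2\}$), and only the values $1/4,-3/4$ on the elements of order $4$ (those with $c$ odd, since $1^2\equiv 3^2\pmod 8$).

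Second, I would isolate the $O(q)$-invariants that already force the partition into the six rows. The subgroup $2A$ equals $\{0,2g\}$ and is therefore characteristic, so $2g$ is fixed by every isometry; this accounts for the two length-$1$ orbits, of squares $0$ and $1$. Since any isometry preserves the order of an element and the value of $q$, the remaining elements split according to order ($2$ or $4$) and $q$-value, and by the formula above this produces exactly the four sets represented by $e$, $e+f$, $g$, $e+f+g$, with the squares as listed. Hence every orbit is contained in one of the six sets, and the square column is immediate.

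Third, the length column follows by a count, once transitivity is known. I would use the standard fact that the split form $q_u$ on $\bb{F}_2^{2m}$ has $2^{2m-1}+2^{m-1}$ isotropic and $2^{2m-1}-2^{m-1}$ anisotropic vectors; combining this with the cases $c\in\{0,1,2,3\}$ reproduces the four numbers $2^{2m}-1$, $2^{2m}-1$, $2^{2m}+2^m$ and $2^{2m}-2^m$, and these sum (with the two singletons) to $2^{2m+2}=\#A$ as a consistency check.

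The genuine content, and the step I expect to be the main obstacle, is transitivity of $O(q)$ on each of the four non-singleton sets. The subgroup $O(u(2)^m)\cong O^+_{2m}(\bb{F}_2)$ acts transitively on the nonzero isotropic vectors and on the anisotropic vectors of the $u$-part by Witt's theorem, but this alone does not connect elements whose $\bracket{1/4}$-component $c$ differs. To bridge these I would exhibit explicit mixing isometries: the involution fixing $u(2)^m$ and sending $g\mapsto -g=3g$, together with Eichler-type transvections attached to an isotropic vector of the $u$-part, for instance the isometry determined by $e\mapsto e$, $f\mapsto e+f+2g$, $g\mapsto e+g$, which one checks directly preserves $q$ and is bijective (so in particular $g$ and $e+g$ lie in one orbit). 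A short argument combining these with $O(u(2)^m)$ then matches any two elements of the same set; alternatively one may invoke Witt's extension theorem for finite quadratic forms. Granting transitivity, the orbit lengths coincide with the counts above and the lemma follows.
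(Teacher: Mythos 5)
Your argument is correct, but it is not the route the paper takes: the paper omits the proof of Lemma \ref{orbit} altogether, saying only that it goes by induction on $m$ (i.e.\ splitting off one $u(2)$ summand at a time), whereas you treat all $m$ at once. Your three steps check out. The formula $q(v+cg)=q_u(v)+c^2/4$ with $c^2\equiv 0,4,1,1 \pmod 8$ for $c=0,2,1,3$, together with the counts $2^{2m-1}\pm 2^{m-1}$ of isotropic and anisotropic vectors of the split form on $\bb{F}_2^{2m}$, yields exactly the six candidate classes with the stated squares and lengths (and the total $2+2(2^{2m}-1)+(2^{2m}+2^m)+(2^{2m}-2^m)=2^{2m+2}=\#A$, as you note). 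The singleton $\{2g\}$ is forced because $2A=\{0,2g\}$ is preserved by any group automorphism, hence $2g$ is fixed by every isometry. For transitivity, your explicit mixing map $e\mapsto e$, $f\mapsto e+f+2g$, $g\mapsto e+g$ (identity on the remaining summands) is indeed an isometry --- e.g.\ $q(e+f+2g)=1+1+0\equiv 0$, $q(e+g)=1/4$, and $b(e+f+2g,e+g)=b(f,e)+2b(g,g)=1/2+1/2\equiv 0$ --- and, usefully, this \emph{single} isometry bridges all four large classes: it sends $g\mapsto e+g$, $f\mapsto e+f+2g$, and $e+f\mapsto f+2g$, so combined with $O(q_u)\cong O^+_{2m}(\bb{F}_2)$ (transitive on nonzero isotropic and on anisotropic vectors, by Witt over $\bb{F}_2$) and with $g\mapsto 3g$, any two elements with the same order and square and outside $2A$ are in one orbit. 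Compared with the induction the paper alludes to, your proof has the merit of exhibiting concrete isometries moving prescribed elements, which is close in spirit to how the lemma is actually used in Section \ref{invariants} and Lemma \ref{surj}.

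One sentence you should delete or repair: the suggestion that one may ``alternatively invoke Witt's extension theorem for finite quadratic forms.'' Unrestricted Witt extension is \emph{false} for this form, and your own table shows why: $2g$ and $e+f$ have the same order and the same square, so the evident isometry of cyclic subforms $\bracket{2g}\rightarrow\bracket{e+f}$ exists, yet it extends to no isometry of $(A,q)$, since every isometry preserves $2A=\{0,2g\}$ --- this is precisely the phenomenon creating your two singleton orbits of equal square. The finite-form analogues of Witt's theorem carry extra hypotheses that are not automatic here, so the explicit isometries are not an optional shortcut but the necessary core of the transitivity step; with that remark removed (or replaced by a properly stated version), the proof is complete.
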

The proof is given by induction on $m$ and we omit it. \\

Now we are going to describe the latter set of Proposition \ref{correspondence},
i.e., we classify the Hodge structures on $N$ induced from embeddings 
$T\subset N$ as in the proposition.
We recall Definition \ref{df}, there is an isomorphism 
\begin{equation}\label{nikulin3}
q_N \simeq (q_K \oplus q_T)|_{\Gamma^{\perp}}/\Gamma, 
\end{equation}
where $\Gamma$ is the pushout of a sign-reversing isometry of subgroups 
$\Gamma_K\subset A_K$ and $\Gamma_T\subset A_T$. 
By Proposition \ref{invariance}, $\Gamma_T$ is an invariant of the Hodge structure.

We will prove the converse. Namely, suppose we have two embeddings $T\subset N_1$
and $T\subset N_2$ whose orthogonal complements are denoted by $K_i$. 
For each embedding we have $(\Gamma_i, \Gamma_{K,i} \Gamma_{T,i})$ and the equality
(\ref{nikulin3}). What we want to show is
\[(*):\Gamma_{T,1}=\Gamma_{T,2}\Rightarrow N_1\simeq N_2 {\rm (Hodge\ isometry)}.\]
The argument goes as follows. Assume we could find an isometry $\sigma_K:
A_{K_1}\rightarrow A_{K_2}$ such that the following commutes.
\[\begin{CD}
 A_{K_1} \supset\ @. \Gamma_{K,1} @>\sim>> \Gamma_{T,1} @. \ \subset A_{T}\\
@V\sigma_K VV @V\sigma_K VV @VV\id V @VV\id V \\
 A_{K_2} \supset\ @. \Gamma_{K,2} @>\sim>> \Gamma_{T,2} @. \ \subset A_{T}
\end{CD}\]
Then by Lemma \ref{surj} we can lift $\sigma_K$ to $\sigma'_K: K_1
\stackrel{\sim}{\rightarrow} K_2$ and the pair $(\sigma'_K,\id_T)$
can be lifted to an Hodge isometry $N_1\stackrel{\sim}{\rightarrow} N_2$.
Thus it is enough to find $\sigma_K$. 

By Proposition \ref{e7}, $[N:K\oplus T]=\#\Gamma=4.$ 
Thus there are two possibilities of underlying groups of $\Gamma_K\simeq \Gamma_T$.
We consider each case separately. 

\medskip
First we consider the case $\Gamma_{T,i}\simeq \bb{Z}/4\bb{Z}$. 
The square of the generator 
$g_T\in \Gamma_{T,1}=\Gamma_{T,2}$
is independent of the 
choice and there are two possibilities, $q_T(g_T)=-1/4$ or $3/4$.
Let $(g_{K,i},g_T)\in \Gamma_i$. We have $q_{K_i}(g_{K,i})=1/4$ or $-3/4$ respectively.
By Lemma \ref{orbit}, in these cases we can find $\sigma_K$ and 
$(*)$ is proved. We find easily that there are $10$ subgroups $\Gamma_T$ 
satisfying $q_T(g_T)=-1/4$. Also there are $6$ with $q_T(g_T)=3/4$.

\medskip
Second we consider the case $\Gamma_T\simeq \bb{Z}/2\bb{Z}\oplus \bb{Z}/2\bb{Z}$.
The argument becomes slightly complicated, but the conclusion is the same.
To prove $(*)$ in this case, first we show that 
$\Gamma_{i}$ always contains a particular element. 
Here, for a clear argument, we take generators $g_i$ and $g'$ of $\bracket{1/4}\subset A_{K_i}$ and
$\bracket{-1/4}\subset A_T$ respectively. We denote an element of 
$A_{K_i}\oplus A_T$ by 
\[(x,y;z,w)\in A_{K_i}\oplus A_T \ ;\ x\in u(2)^{\oplus 3},
y\in \bracket{1/4}, z\in u(2)^{\oplus 2}, w\in \bracket{-1/4}.\]
Then the claim is that 
\[(0,2g_i;0,2g')\in \Gamma_i.\]
In fact, since $\Gamma_i$ is contained in $(A_{K_i}\oplus A_T)_2$, 
the radical element $(0,2g_i;0,0)$ of $(A_{K_i}\oplus A_T)_2$ 
is in $\Gamma^{\perp}_i$.
Hence its residue class $(0,2g_i;0,0)+\Gamma_i$ determines an element of $A_{N_i}$ by the 
isomorphism (\ref{nikulin3}). It is nonzero because $q_{K_i}(2g_i)=1.$
Since $A_{N_i}$ is nondegenerate, there exists an element $(x,y;z,w)+\Gamma_i \in A_{N_i}$
with $(0,2g_i;0,0)\cdot (x,y;z,w)=1/2.$ It follows $y=\pm g_i$.
Further since $(q_{K_i}\oplus q_T)(x,y;z,w)\in\bb{Z}$, it follows $w=\pm g'$,
i.e., there exists an element in $\Gamma_i^{\perp}$ of the form $(x,\pm g_i;z,\pm g')$.
Since the residue class of this element is of order $2$ in $A_{N_i}$, we have that 
$(0,2g_i;0,2g')\in \Gamma_i$.

Let $\Gamma_{T,1}=\Gamma_{T,2}=\bracket{2g',\alpha}$. 
Replacing $\alpha$ by $\alpha+2g'$ if necessary,  
we can assume $q_T(\alpha)=0$. Let $(\beta_i,\alpha)\in \Gamma_i$, $q_{K_i}(\beta_i)=0$. 
By Lemma \ref{orbit}, we can find $\sigma_K: A_{K_1}\stackrel{\sim}{\rightarrow}
A_{K_2}$ which takes $\beta_1$ to $\beta_2$. This $\sigma_K$ must take $2g_1$ to $2g_2$,
so we have now proved $(*)$.
There are $15$ possible $\Gamma_T$
in this case.

In summary, we have obtained the following.
\begin{prop}\label{fin}
Let $X$ be a Picard-general Jacobian Kummer surface. Then 
free involutions $\sigma_1,\sigma_2$ are conjugate 
if and only if the patching subgroups $\Gamma_{\sigma_1},\Gamma_{\sigma_2}$ coincide.
There exist (at most) $31=10+15+6$ free involutions. 
\end{prop}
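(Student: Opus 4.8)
The plan is to split the equivalence into its two directions and then read off the upper bound from the two cases already isolated in the body. Throughout I use that $X$ Picard-general means $\rho(X)=17$ is odd, so by the Remark following Proposition \ref{invariance} we have $\AutH(T_X)=\{\pm\id\}$ and Proposition \ref{invariance} is available. The forward implication is then immediate: if $\sigma_1,\sigma_2$ are conjugate in $\Aut(X)$, the quotient Enriques surfaces $X/\sigma_1$ and $X/\sigma_2$ are isomorphic by the criterion recalled in the Introduction, and Proposition \ref{invariance} yields $\Gamma_{\sigma_1}=\Gamma_{\sigma_2}$.

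For the converse I would first translate equality of patching subgroups into equality of gluing data. By Definition \ref{df} the subgroup $\Gamma_{\sigma}$ is the inverse image of $\Gamma_{T}=\Gamma_{T_X}$ under the canonical isometry $A_{NS(X)}\xrightarrow{\sim}A_{T_X}$ (cf. Lemma \ref{gamma}), so $\Gamma_{\sigma_1}=\Gamma_{\sigma_2}$ is literally the hypothesis $\Gamma_{T,1}=\Gamma_{T,2}$ of statement $(*)$. Applying $(*)$ produces a Hodge isometry $N_1\simeq N_2$; Proposition \ref{correspondence} then identifies $X/\sigma_1\simeq X/\sigma_2$, and the Introduction's criterion returns the conjugacy of $\sigma_1$ and $\sigma_2$. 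Thus the conjugacy class of a free involution is faithfully recorded by its patching subgroup, which is exactly the equivalence.

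It remains to bound the number of patching subgroups that can occur. Since $[N:K\oplus T]=\#\Gamma=4$ by Proposition \ref{e7}, every $\Gamma_T\subset A_T$ has order $4$ and is either cyclic or $2$-elementary, matching the two cases in the body. In the cyclic case a generator has the shape $x+g'$ with $x\in u(2)^{\oplus 2}$ and $g'$ the generator of $\bracket{-1/4}\subset A_T$; orthogonality of the two summands gives $q_T(x+g')=q_T(x)-1/4$, which equals $-1/4$ or $3/4$ according as $q_T(x)$ is $0$ or $1$. Since $x\mapsto\bracket{x+g'}$ is a bijection onto the cyclic order-$4$ subgroups, counting $x\in u(2)^{\oplus 2}$ by parity of $q_T(x)$ yields $10$ subgroups with $q_T(g_T)=-1/4$ and $6$ with $q_T(g_T)=3/4$. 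In the $2$-elementary case the body shows that every such $\Gamma_T$ contains $2g'$, so $\Gamma_T=\bracket{2g',\alpha}$ with $\alpha$ ranging over the $2^4-1=15$ nonzero classes of $(A_T)_2/\bracket{2g'}\simeq u(2)^{\oplus 2}$. Altogether at most $10+6+15=31$ patching subgroups arise, hence at most $31$ conjugacy classes of free involutions.

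The genuinely hard input is statement $(*)$, namely that the isomorphism class of the glued Hodge lattice $N$ is pinned down by $\Gamma_T$ alone; the proposition itself is then pure bookkeeping. The proof of $(*)$ constructs a discriminant isometry $\sigma_K\colon A_{K_1}\to A_{K_2}$ compatible with the gluing square, using the explicit $O(q_K)$-orbit decomposition of $A_K\simeq u(2)^{\oplus 3}\oplus\bracket{1/4}$ (Lemma \ref{orbit}), and then lifts $\sigma_K$ to a genuine lattice isometry via the surjectivity $O(K)\twoheadrightarrow O(q_K)$ (Lemma \ref{surj}). The delicate point is the $2$-elementary case, where one must first force $(0,2g_i;0,2g')\in\Gamma_i$ in order to control the behaviour of $\sigma_K$ on the $\bracket{1/4}$-part before the orbit lemma can be applied.
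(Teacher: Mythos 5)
Your proposal is correct and follows essentially the same route as the paper: the forward direction via Proposition \ref{invariance} (using that $\rho(X)=17$ is odd), the converse via statement $(*)$ combined with Lemmas \ref{orbit} and \ref{surj} and the isomorphism criterion of Proposition \ref{correspondence}, and the same split into cyclic and $2$-elementary cases with $\#\Gamma_T=4$ from Proposition \ref{e7}, giving $10+6+15=31$. Your only addition is to make explicit the cyclic-case count via the bijection $x\mapsto\bracket{x+g'}$ and the parity of $q_T(x)$, a step the paper compresses to ``we find easily.''
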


The existence of $31$ free involutions is assured by concrete constructions
in the following sections.

\section{The $(16)_6$ configuration on a Jacobian Kummer surface}\label{166}
 
In this section we recall and prepare notations
concerning the divisors on Jacobian Kummer surfaces. 
The content of this section is known, 
references are \cite{keum97, kondo98, dolgachev-keum}.

{\textbf{The index set.}}
Let $C$ be a smooth projective curve of genus $2$. 
It is a double cover of $\bb{P}^1$ 
which ramifies at $6$ Weierstrass points $\{p_1,\cdots ,p_6\}\subset C$.
Here we should notice the linear equivalence 
\[p_i+p_j+p_k-p_l-p_m-p_n\sim 0 \]
for an arbitrary permutation $\{i,j,k,l,m,n\}$ of $\{1,\cdots,6\}$.
The set of theta characteristics of $C$ is by definition
\[S(C)=\{D\in \Pic (C)|2D\sim K_C\}.\]
They are divided into odd theta characteristics 
$\{[p_i]|i=1,\cdots ,6\}$ and even ones 
$\{[p_i+p_j-p_k]|i,j,k \mathrm{\ are\ distinct\ each\ other}\}$. There are $16$ 
theta characteristics.

The Jacobian variety $J(C)$ consists of divisor classes of degree $0$ on $C$.
We denote by $J(C)_2$ the set of sixteen $2$-torsion points of $J(C)$. Then 
\[J(C)_2=\{0\}\cup \{[p_i-p_j]|i\neq j\}.\]

These $16+16=32$ divisor classes naturally correspond to partitions of the set 
$\{1,\cdots 6\}$ into two subsets in the following way.
\begin{gather*}
[p_i]\in S(C) \longleftrightarrow\{i\}\cup \{i\}^c.\\
[p_i+p_j-p_k]\in S(C) \longleftrightarrow\{i,j,k\}\cup \{i,j,k\}^c.\\
[p_i-p_j]\in J(C)_2 \longleftrightarrow\{i,j\}\cup \{i,j\}^c.\\
0\in J(C)_2 \longleftrightarrow \emptyset\cup\{1,\cdots,6\},
\end{gather*}
where the complement is taken in the set $\{1,\cdots,6\}$.
We denote these partitions
by exhibiting one of the subsets, surrounded by $[\ ]$.
For example, $p_1-p_2$ corresponds to $[12]=[3456]$, $p_1+p_2-p_3$ corresponds to $[123]=[456]$,
etc. $[\emptyset]$ is denoted by $[0]$.
In this notation, we see that the symmetric difference of subsets 
$\alpha,\beta$ of $\{1,\cdots,6\}$ corresponds to addition or difference in $\Div (C)$
as follows.
\begin{gather*}
[\alpha\circleddash \beta]=[\alpha]-[\beta]\ {\rm if}\ [\alpha],[\beta] \in S(C),\\
[\alpha\circleddash \beta]=[\alpha]+[\beta]\ {\rm otherwise}.
\end{gather*}
When we use a partition $[\alpha]$ as an index, $[\ ]$ will be omitted.

\medskip
{\textbf{The $(16)_6$ configuration.}}
The sixteen theta divisors on $J(C)$ corresponding to $\beta\in S(C)$ are 
\[\Theta_{\beta}=\{[p-\beta]\in J(C)|p\in C\}.\]
The sixteen nodes $\{n_{\alpha}\in \overline{X}|\alpha\in J(C)_2\}$
on $\overline{X}=J(C)/\{\pm 1\}$ are the images of $\alpha\in J(C)_2$.
On the minimal desingularization $X$ of $\overline{X}$,
$n_{\alpha}$ is blown up to give a smooth rational curve $N_{\alpha}$ on $X$.
The tropes $\overline{T_{\beta}}\subset \overline{X}$ and $T_{\beta}\subset X$
are the strict transforms of ${\Theta}_{\beta}$.
Hence we obtain $32$ $(-2)$-curves $\{N_{\alpha},T_{\beta}\}_{\alpha,\beta}$ on $X$.
The incidence relation between these divisors 
is called the $(16)_6$ configuration. It is given explicitly by 
\begin{gather*}
(N_{\alpha},N_{\alpha '})=-2\delta_{\alpha,\alpha '},\qquad 
(T_{\beta},T_{\beta '})=-2\delta_{\beta,\beta '},\\
(N_{\alpha},T_{\beta})=1 \Leftrightarrow \alpha+\beta\in\{[1],[2],[3],[4],[5],[6]\}.
\end{gather*}

A permutation of the set $\{N_{\alpha},T_{\beta}\}_{\alpha,\beta}$
which preserves the incidence relation above is called an automorphism.
Nikulin \cite{nikulin-jac} showed that the automorphism 
group is isomorphic to $(\bb{Z}/2\bb{Z})^{5}\rtimes \mf{S}_6$,
where $(\bb{Z}/2\bb{Z})^{5}$ consists of automorphisms
induced from translations by elements of $J(C)_2\cup S(C)$
and $\mf{S}_6$ acts on the index set $\{1,\cdots, 6\}.$
We took our notations as above because this $\mf{S}_6$-action is best seen.

Translations with respect to $\alpha \in J(C)_2$ are geometrically 
realized on $J(C)$. They induce automorphisms $t_{\alpha}$ of $X$.
These are the {\em translations} in the classical terms. 
In the next section we will see that translations with respect to 
$\beta\in S(C)$ are also geometrically realized by $\sigma_{\beta}\in \Aut (X)$.
These $\sigma_{\beta}$ are the {\em switches}. 
On the other hand, in general the action of $\mf{S}_6$ cannot be 
lifted to an automorphism of $X$.

{\rmk} In \cite{keum97} and \cite{kondo98}, the notations are a little different.
To adjust notations of \cite{kondo98} to ours, first we regard
$p_0$ of \cite{kondo98} as our $p_6$.
Then the correspondence is as in below.
\begin{center}
\begin{tabular}{c|cccccc}
\cite{kondo98} & $N_0$ & $N_i$ & $N_{ij}$ & $T_0$ & $T_i$ & $T_{ij}$ \\ \hline
ours & $N_0$ & $N_{i6}$ & $N_{ij}$ & $T_6$ & $T_i$ & $T_{ij6}$ 
\end{tabular}
\end{center}
\begin{lem}[\cite{kondo98}]\label{nsstr1}
For $\beta\in S(C)$, let $\Lambda(\beta):=\{\alpha\in J(C)_2|(N_{\alpha},T_{\beta})=1\}.$
Then the divisor class of
\[H=2T_{\beta}+\sum_{\alpha\in \Lambda(\beta)}N_{\alpha}\]
is independent of $\beta$ and coincides with the pullback of the hyperplane section
by the morphism $X\rightarrow \overline{X}\subset \bb{P}^3$.
\end{lem}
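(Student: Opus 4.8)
The plan is to identify the class $H$ with the pullback of a hyperplane class along the desingularization $\pi\colon X\to\overline{X}\subset\bb{P}^3$, exploiting the classical fact that each trope plane is tangent to the Kummer quartic along its conic. This makes the independence of $\beta$ automatic and reduces the equality to a formal intersection computation on the resolution.

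First I would recall the geometry of the tropes. For $\beta\in S(C)$ the image $\overline{T_\beta}$ of the theta divisor $\Theta_\beta$ is a conic on $\overline{X}$, cut out by a plane $\Pi_\beta\subset\bb{P}^3$ which is tangent to the quartic $\overline{X}$ along that conic; this is exactly the defining property of a trope in the $(16)_6$ configuration, and it expresses the passage of $\overline{T_\beta}$ through the six nodes $n_\alpha$ with $\alpha\in\Lambda(\beta)$. Consequently the hyperplane section $\Pi_\beta\cap\overline{X}$, as a Cartier divisor on $\overline{X}$, is the double conic $2\overline{T_\beta}$. Since all hyperplanes of $\bb{P}^3$ are linearly equivalent, the restricted class $h|_{\overline{X}}$ does not depend on the chosen plane, and therefore neither does its pullback $H:=\pi^{*}(h|_{\overline{X}})$.

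Next I would compute this pullback. Writing $\pi^{*}\overline{T_\beta}=T_\beta+\sum_\alpha c_\alpha N_\alpha$ in terms of the strict transform $T_\beta$ and the exceptional $(-2)$-curves $N_\alpha$, the coefficients are forced by the requirement that the pullback of a divisor meets every contracted curve in degree $0$. Intersecting with $N_\alpha$ and inserting the configuration data $(N_\alpha,N_{\alpha'})=-2\delta_{\alpha,\alpha'}$ together with $(T_\beta,N_\alpha)=1\Leftrightarrow\alpha\in\Lambda(\beta)$ gives $c_\alpha=(T_\beta\cdot N_\alpha)/2$, i.e. $c_\alpha=1/2$ precisely for $\alpha\in\Lambda(\beta)$ and $c_\alpha=0$ otherwise. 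Hence $\pi^{*}\overline{T_\beta}=T_\beta+\tfrac12\sum_{\alpha\in\Lambda(\beta)}N_\alpha$, and
\[ H=\pi^{*}(h|_{\overline{X}})=\pi^{*}(2\overline{T_\beta})=2T_\beta+\sum_{\alpha\in\Lambda(\beta)}N_\alpha. \]
Independence of $\beta$ is now immediate, since the left-hand side carries no reference to $\beta$.

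As a consistency check one may verify directly from the configuration that
\[ \left(2T_\beta+\sum_{\alpha\in\Lambda(\beta)}N_\alpha\right)^{2}=4T_\beta^{2}+4\sum_{\alpha\in\Lambda(\beta)}(T_\beta\cdot N_\alpha)+\sum_{\alpha\in\Lambda(\beta)}N_\alpha^{2}=-8+24-12=4, \]
which is the expected degree of a quartic hyperplane section. The only nonformal ingredient is the classical tangency statement $\Pi_\beta\cap\overline{X}=2\overline{T_\beta}$; I expect this to be the main point to secure, but since the present section merely recalls known material (the lemma being attributed to Kondo), I would simply invoke the standard theory of the $(16)_6$ configuration rather than reprove the tangency from scratch.
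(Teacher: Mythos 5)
Your proof is correct, and since the paper states this lemma without proof (quoting it from \cite{kondo98}), there is nothing to diverge from: your argument --- the classical tangency $\Pi_\beta\cap\overline{X}=2\overline{T_{\beta}}$ combined with the pullback computation that forces coefficient $1/2$ on each $N_\alpha$ with $\alpha\in\Lambda(\beta)$ (uniquely, by negative definiteness of the exceptional configuration) --- is exactly the standard argument behind the cited statement, and your check $H^2=4$ is consistent. The one nonformal ingredient, that each trope plane is tangent to the quartic along the conic $\overline{T_{\beta}}$, is classical $(16)_6$ material and may be cited from \cite{maria}, as you propose.
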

\begin{lem}[\cite{kondo98}]\label{nsstr2}
Assume that $X$ is Picard-general.
\begin{enumerate}
\item $NS(X)$ is generated over $\bb{Z}$ by $\{ N_{\alpha},T_{\beta}\}_{\alpha,\beta}$.
\item $\{H,N_{\alpha}\}_{\alpha}$ is an orthogonal basis of $NS(X)_{\bb{Q}}$ over $\bb{Q}$.
\item A generator set of the discriminant group $A_{NS(X)}$ is given by 
\begin{gather*}
\msf{e}_1=(N_{26}+N_{12}+N_{36}+N_{13})/2,\msf{f}_1=(N_{16}+N_{12}+N_{46}+N_{24})/2,\\
\msf{e}_2=(N_{26}+N_{12}+N_{46}+N_{14})/2,\msf{f}_2=(N_{16}+N_{12}+N_{36}+N_{23})/2,\\
\msf{g}=H/4+(N_{0}+N_{16}+N_{26}+N_{12})/2.
\end{gather*}
\end{enumerate}
\end{lem}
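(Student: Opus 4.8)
The plan is to prove the three assertions in the order (2), (1), (3), taking as the only external input the abstract identification $NS(X)\cong U\oplus D_4^{\oplus 2}\oplus D_7$ recalled above, which gives $|\det NS(X)|=2^6$ and $q_{NS(X)}\simeq u(2)^{\oplus 2}\oplus\bracket{1/4}$.

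Assertion (2) I would treat as a direct computation from the $(16)_6$ incidence relations together with Lemma \ref{nsstr1}. Substituting $H=2T_\beta+\sum_{\alpha\in\Lambda(\beta)}N_\alpha$ and using $(N_\alpha,T_\beta)=1\Leftrightarrow\alpha\in\Lambda(\beta)$ and $(N_\alpha,N_{\alpha'})=-2\delta_{\alpha\alpha'}$, one finds $H\cdot N_\gamma=0$ for every node $\gamma$ (the contributions $2\,T_\beta\cdot N_\gamma$ and $N_\gamma\cdot\sum_{\alpha\in\Lambda(\beta)}N_\alpha$ cancel), while $H^2=4$ and $N_\gamma^2=-2$. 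Hence $\{H,N_\alpha\}_\alpha$ consists of $17$ mutually orthogonal classes of nonzero norm; since $\rank NS(X)=17$ they are linearly independent and form a $\bb{Q}$-basis of $NS(X)_{\bb{Q}}$.

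For assertion (1) I would put $L_0=\bracket{H,N_\alpha}_\alpha$ and $L=\bracket{N_\alpha,T_\beta}_{\alpha,\beta}$, so that $L_0\subset L\subset NS(X)$, all of rank $17$. From $|\det L_0|=4\cdot 2^{16}=2^{18}$ and $|\det NS(X)|=2^6$ one gets $[NS(X):L_0]=2^6$, so it suffices to prove $[L:L_0]=2^6$. By Lemma \ref{nsstr1} each trope satisfies $T_\beta=(H-\sum_{\alpha\in\Lambda(\beta)}N_\alpha)/2$, and its image in $A_{L_0}\cong\bb{Z}/4\oplus(\bb{Z}/2)^{16}$ is the $2$-torsion element $2\msf{g}_0+\sum_{\alpha\in\Lambda(\beta)}[N_\alpha]$ with $\msf{g}_0=[H/4]$. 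Since $L\subset NS(X)$ already forces $[L:L_0]\le 2^6$, it is enough to show that these trope classes generate a subgroup of $A_{L_0}$ of order $2^6$, a finite $\bb{F}_2$-linear-algebra computation read off from the incidence matrix of the $(16)_6$ configuration. Equality $[L:L_0]=2^6$ then yields $\det L=\det NS(X)$, hence $L=NS(X)$.

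Finally I would verify assertion (3) in the basis furnished by (1) and (2). Using $NS(X)=\bracket{N_\alpha,T_\beta}$, I check that each of $\msf{e}_1,\msf{f}_1,\msf{e}_2,\msf{f}_2,\msf{g}$ lies in $NS(X)^*$: the intersection with any $N_\gamma$ is automatically integral, and the intersection with a trope $T_\beta$ is integral precisely because $T_\beta$ meets an \emph{even} number of the four nodes defining each $\msf{e}_i,\msf{f}_i$, and — combined with $H\cdot T_\beta/4=1/2$ — an \emph{odd} number of the four nodes $N_0,N_{16},N_{26},N_{12}$ entering $\msf{g}$. Granting this, the discriminant form follows from orthogonality and $H^2=4$, $N_\gamma^2=-2$: one gets $q(\msf{e}_i)=q(\msf{f}_i)=0$, $b(\msf{e}_i,\msf{f}_i)=1/2$ with all remaining pairings among $\{\msf{e}_i,\msf{f}_i\}$ zero, so $\{\msf{e}_1,\msf{f}_1\}$ and $\{\msf{e}_2,\msf{f}_2\}$ are standard generators of two orthogonal copies of $u(2)$, while $q(\msf{g})=1/4$ with $\msf{g}$ orthogonal to the rest, giving a $\bracket{1/4}$. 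These five classes thus realize $u(2)^{\oplus 2}\oplus\bracket{1/4}$, a form of order $2^6=|A_{NS(X)}|$, and therefore generate $A_{NS(X)}$. The main obstacle throughout is the combinatorial bookkeeping of the $(16)_6$ configuration underlying the rank count in (1) and the parity conditions in (3); I would organize it using the rule $\alpha\in\Lambda(\beta)\Leftrightarrow\alpha+\beta\in\{[1],\dots,[6]\}$, reducing every incidence to a symmetric-difference condition on subsets of $\{1,\dots,6\}$.
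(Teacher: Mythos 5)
Your proposal is correct, but note that the paper itself offers no proof of this lemma --- it is imported verbatim from \cite{kondo98} --- so the comparison is with the standard argument rather than with anything in the text. Your route (orthogonality of $\{H,N_\alpha\}$ from Lemma \ref{nsstr1} and the $(16)_6$ relations, then determinant/index counting $[NS(X):L_0]=2^6$ against $[L:L_0]$, then realizing the discriminant form $u(2)^{\oplus 2}\oplus\bracket{1/4}$ by explicit dual classes) is essentially that standard argument, and all your computations that I can check are right: $H\cdot N_\gamma=0$, $H^2=4$, $|\det L_0|=2^{18}$, $|\det NS(X)|=2^6$, and the pairing table for $\msf{e}_i,\msf{f}_i,\msf{g}$.

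Two of your steps are asserted rather than carried out, and since they are the actual crux you should close them explicitly; both are true. First, the $\bb{F}_2$-rank computation in (1): the sixteen trope classes $v_\beta=2\msf{g}_0+\sum_{\alpha\in\Lambda(\beta)}[N_\alpha]$ satisfy $v_{[1]}+\cdots+v_{[6]}=0$ (node $0$ occurs in all six $\Lambda([i])$, each $[ij]$ in exactly two), so the six odd tropes have rank $5$; an even trope such as $v_{[123]}$ is not in their span, because every $\sum_{i\in S}v_{[i]}$ has equal first coordinate and $N_0$-coordinate ($\equiv |S|$), while $v_{[123]}$ has first coordinate $1$ and $N_0$-coordinate $0$. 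With the a priori bound $[L:L_0]\le[NS(X):L_0]=2^6$ this gives rank exactly $6$, as you need. Second, the parity facts in (3): the four-node sets underlying $\msf{e}_1,\msf{f}_1,\msf{e}_2,\msf{f}_2$ are G\"{o}pel tetrads (e.g.\ $\{n_{26},n_{12},n_{36},n_{13}\}=n_{26}+\{n_0,n_{16},n_{23},n_{45}\}$), and $\#(\Lambda(\beta)\cap(t+G))=\#\bigl(W\cap(\beta+t+G)\bigr)$ with $W=\{[1],\dots,[6]\}$, which one checks is always even for G\"{o}pel $G$; for $\msf{g}$ the set $\{n_0,n_{16},n_{26},n_{12}\}$ is a Rosenhain subgroup, met by every $\Lambda(\beta)$ in an odd number of points, which exactly cancels $H\cdot T_\beta/4=1/2$. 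Finally, your last inference (``order $2^6$, therefore they generate'') implicitly needs injectivity of the map from the abstract group $(\bb{Z}/2)^4\oplus\bb{Z}/4$ to $A_{NS(X)}$: this follows because the form you computed, $u(2)^{\oplus 2}\oplus\bracket{1/4}$, is nondegenerate, so any relation among the images would yield a nonzero element pairing trivially with everything. With these three points made explicit, the proof is complete.
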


\medskip
{\textbf{Special sets of nodes.}}
Lastly we mention several special sets of nodes of $\overline{X}$.
See also \cite{dolgachev-keum}.
We identify the set of nodes with $J(C)_2$ which is a $4$-dimensional 
vector space over $\bb{F}_2$. We have then the symplectic bilinear form
\[([\alpha],[\alpha'])\mapsto \#(\alpha\cap \alpha') \mod 2.\]

A $2$-dimensional subspace $G$ is called {\em{G\"{o}pel}} if it
is totally isotropic. The translations of G\"{o}pel subgroups are 
called {\em{G\"{o}pel tetrads}}. There are $60$ G\"{o}pel tetrads.
A $2$-dimensional subspace $R$ which is not G\"{o}pel is called {\em{Rosenhain}} 
and its translations {\em{Rosenhain tetrads}}. There are $80$ Rosenhain tetrads.
A {\em{Weber hexad}} is a $6$-set which can be written as the symmetric difference of 
a G\"{o}pel tetrad and a Rosenhain tetrad. 
It can be shown that any Weber hexad is of one of the following forms
\begin{equation}\label{web}
\{0,ij,jk,kl,lm,mi\}\ {\mathrm{or}}\ \{ij,jk,ki,il,jm,kn\}
\end{equation}
according to whether it contains $0$ or not.
There are $192$ Weber hexads.

In the following sections, we introduce automorphisms using these special sets.

\section{Switches}\label{switch}

Switches are one kind of automorphisms 
found by F. Klein \cite{klein}. The freeness in even cases is an 
easy consequence of the description of \cite{maria}, although it is implicit there.
Let $\beta\in S(C)$.
For a smooth point $\overline{a}\in \overline{X}$, which means
that the preimage of $\overline{a}$ in $J(C)$ is $\{a,-a\}$,
the divisors $t_a(\Theta_{\beta})$ and $t_{-a}(\Theta_{\beta})$ 
intersect at two points, which is of the form  
\[t_a(\Theta_{\beta})\cap t_{-a}(\Theta_{\beta})=\{b,-b\}.\]
The switch is defined by $\sigma_{\beta}:\overline{a}\mapsto \overline{b}$.

More precisely, these switches are defined as the composite 
of the Gauss map
\[G: \bb{P}^3 \supset \overline{X}\dashrightarrow 
\overline{X}^* \subset (\bb{P}^3)^*, \]
which maps a smooth point $\overline{a}$ to $T_{\overline{a}}\overline{X}$,
and the projective linear isomorphism 
\[F_{\beta}: \overline{X}^*\rightarrow \overline{X},\]
defined for each $\beta$. See \cite{maria}.

$\sigma_{\beta}$ is a birational involution of $\overline{X}$. Hence it induces an 
involution of $X$, which we denote by the same $\sigma_{\beta}$.
We can easily check that $\sigma_{\beta}$ interchanges $N_{\alpha}$ with
$T_{\alpha+\beta}$ for $\forall \alpha\in J(C)_2$.
\begin{prop}
For an even theta characteristic $\beta$, $\sigma_{\beta}$ is a free involution on $X$.
\end{prop}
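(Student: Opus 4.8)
The plan is to prove freeness directly, by showing $\Fix(\sigma_\beta)=\emptyset$ on $X$, and the evenness of $\beta$ will be used at two separate points. I would split $X$ into the $32$ curves $\{N_\alpha,T_\gamma\}$ (together with their finitely many mutual intersection points), and the open complement $U$, which maps isomorphically onto the complement of the nodes and tropes in the smooth locus of $\overline{X}$. On $U$ the automorphism $\sigma_\beta$ is just the birational involution $\overline{a}\mapsto\overline{b}$ of the definition, so the two loci admit genuinely different treatments, and it suffices to exhibit no fixed point on either.

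First I would dispose of the $32$ curves. Since $\sigma_\beta$ interchanges $N_\alpha$ with $T_{\alpha+\beta}$, a fixed point lying on some $N_\alpha$ would have to lie on $N_\alpha\cap T_{\alpha+\beta}$, and a fixed point on a trope $T_\gamma$ would have to lie on $T_\gamma\cap N_{\gamma+\beta}$. By the $(16)_6$ incidence relation, $N_\alpha$ and $T_{\alpha+\beta}$ meet if and only if $\alpha+(\alpha+\beta)=\beta\in\{[1],\dots,[6]\}$; but an even theta characteristic corresponds to a $3$-element subset of $\{1,\dots,6\}$, whereas the classes $[1],\dots,[6]$ are the single-element (odd) ones, so $\beta\notin\{[1],\dots,[6]\}$ and the swapped curves are \emph{disjoint}. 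Hence $\sigma_\beta$ has no fixed point on any of the $32$ curves, and in particular none over a node nor on any trope.

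It remains to rule out fixed points on $U$, i.e. fixed smooth points $\overline{a}\in\overline{X}$. By definition $\sigma_\beta(\overline{a})=\overline{b}$, where $\{b,-b\}=t_a(\Theta_\beta)\cap t_{-a}(\Theta_\beta)$, so $\overline{a}=\overline{b}$ forces $a\in\{b,-b\}\subset t_a(\Theta_\beta)=a+\Theta_\beta$, equivalently $0\in\Theta_\beta$. Here evenness enters a second time: $0\in\Theta_\beta$ holds precisely when $\beta$ is linearly equivalent to an effective divisor, i.e. when $\beta$ is an \emph{odd} theta characteristic; for even $\beta$ one has $0\notin\Theta_\beta$, so no smooth point is fixed. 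Combining the two steps yields $\Fix(\sigma_\beta)=\emptyset$, and since a fixed-point-free involution of a $K3$ surface has Enriques quotient, the proposition follows.

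The two incidence/effectivity computations are routine. The main obstacle is the bookkeeping at the indeterminacy locus of the birational involution on $\overline{X}$: one must be certain that, after resolving, every point of $X$ lying over a node or over a trope is genuinely accounted for by the curve-swapping step, so that no fixed point is hidden there, and that the implication $a\in t_a(\Theta_\beta)\Leftrightarrow 0\in\Theta_\beta$ is valid for \emph{all} $a$ rather than only generic ones. To control the behaviour of $\sigma_\beta$ along the exceptional curves and tropes I would lean on the explicit factorization $\sigma_\beta=F_\beta\circ G$ recorded in \cite{maria}.
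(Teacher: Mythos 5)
Your proof is correct and follows essentially the same two-step route as the paper's: fixed points at smooth points of $\overline{X}$ are excluded because $\overline{a}=\overline{b}$ forces $a\in t_a(\Theta_\beta)$, i.e. $0\in\Theta_\beta$, which fails precisely when $\beta$ is even (non-effective), while points on the $32$ curves are excluded because $\sigma_\beta$ swaps $N_\alpha$ with the disjoint curve $T_{\alpha+\beta}$. Your explicit check via the $(16)_6$ incidence relation that $\alpha+(\alpha+\beta)=\beta\notin\{[1],\dots,[6]\}$ for even $\beta$ simply spells out the disjointness the paper asserts in one line, so no further commentary is needed.
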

\begin{proof}
Suppose a smooth point $\overline{a}\in \overline{X}$ is 
a fixed point of $\sigma_{\beta}$.
This is equivalent to 
$t_a(\Theta_{\beta})\cap t_{-a}(\Theta_{\beta})=\{a,-a\}$
and it is necessary that $a\in t_a(\Theta_{\beta})$, $0\in \Theta_{\beta}.$
This is untrue if $\beta$ is even.

On the other hand, the divisor $N_{\alpha}$ is disjoint from $T_{\alpha+\beta}$,
so $\sigma_{\beta}$ has no fixed points.
\end{proof}

{\rmk} (1) The proof above does not use the assumption of being Picard-general.
Thus switches for even theta characteristics are always free involutions.\\
(2) The fixed point set of a switch for an odd theta characteristic is 
a curve of genus $5$, named after Humbert.\\

Let $\sigma_{\beta}$ be a free switch. 
In the following computation, we take the case $\beta=[123]$ for simplicity.
We can obtain the result for other cases by the action of $\mf{S}_6$.
Let $K$ be the 
$(-1)$-eigenspace of the action of $\sigma_{123}$ on $NS(X)$ as in Section \ref{method}.
\begin{prop}
For Picard-general $X$, $K$ is generated over $\bb{Z}$ by
the following elements.
\begin{gather*}
f=N_{15}-T_{146},\ e_2=T_{145}-N_{16},\ e_3=N_{45}-T_{6},\ e_4=T_{123}-N_{0},\\
e_5=N_{12}-T_{3},\ e_6=T_{124}-N_{34},\ e_7=N_{24}-T_{134}, \\
e_1=-(1/2)(f+2e_2+3e_3+4e_4+3e_5+2e_6+e_7).
\end{gather*}
\end{prop}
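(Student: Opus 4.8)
The plan is to verify directly that the seven displayed elements form an integral basis of the $(-1)$-eigenspace $K$ of $\sigma_{123}$ on $NS(X)$, using the explicit action $\sigma_\beta(N_\alpha)=T_{\alpha+\beta}$ together with Lemma~\ref{nsstr1} and Lemma~\ref{nsstr2}. First I would confirm that each listed element genuinely lies in the $(-1)$-eigenspace. For the elements of the form $N_\alpha - T_{\alpha+123}$ this is immediate: since $\sigma_{123}$ swaps $N_\alpha$ with $T_{\alpha+123}$, we get $\sigma_{123}(N_\alpha - T_{\alpha+123}) = T_{\alpha+123} - N_\alpha = -(N_\alpha - T_{\alpha+123})$. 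One checks that $f$ and $e_2,\dots,e_7$ are each of this shape (e.g. $15+123 = 23$ under symmetric difference, so $N_{15}$ pairs with $T_{23}=T_{146}$, matching $f$; similarly for the rest). The element $e_1$ is an integral combination of these, hence automatically lies in $K$, but one must check it is genuinely in $NS(X)$, i.e. that the half-integral coefficient $1/2$ produces an integral divisor class; this follows from the relation $H=2T_\beta+\sum_{\alpha\in\Lambda(\beta)}N_\alpha$ of Lemma~\ref{nsstr1}, which expresses certain sums of the $N_\alpha,T_\beta$ as divisible classes.

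Next I would compute the rank and the intersection form. By Proposition~\ref{e7} we know a priori that $K\cong E_7(2)$, so $K$ has rank $7$; it therefore suffices to check that the seven elements are linearly independent (which is transparent from the fact that $f,e_2,\dots,e_7$ involve pairwise distinct curves $N_\alpha,T_\beta$) and then to verify that they span $K$ rather than a finite-index sublattice. The natural way to pin down the correct saturation is to compute the Gram matrix of $\{e_1,\dots,e_7,f\}$ — here the subscripting of the $e_i$ is chosen so that the intersection pairing reproduces the $E_7$ Dynkin diagram scaled by $2$. Using $(N_\alpha,N_{\alpha'})=-2\delta$, $(T_\beta,T_{\beta'})=-2\delta$, and $(N_\alpha,T_\beta)=1$ exactly when $\alpha+\beta\in\{[1],\dots,[6]\}$, each pairing is a short finite computation. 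The element $e_1$ is then forced: it is defined precisely as the unique vector completing $e_2,\dots,e_7,f$ to the $E_7(2)$ root system, which is why its coefficients $-(1/2)(f+2e_2+3e_3+4e_4+3e_5+2e_6+e_7)$ are exactly the marks (Coxeter labels) of the extended $E_7$ diagram.

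To show these genuinely \emph{generate} $K$ over $\bb{Z}$, and not merely a finite-index overlattice, I would argue that the lattice $K'$ they span has the same discriminant as $E_7(2)$. Since the Gram matrix realizes the $E_7(2)$ form, $\det K' = \det E_7(2) = 2^7\cdot\det E_7 = 2^7\cdot 2 = 2^8$; comparing with $K\cong E_7(2)$ from Proposition~\ref{e7} and the inclusion $K'\subseteq K$, equality of determinants forces $K'=K$. Alternatively, one verifies directly that any $(-1)$-eigenvector, written in the basis $\{H,N_\alpha\}$ of $NS(X)_{\bb{Q}}$ from Lemma~\ref{nsstr2}(2) and required to be an integral class by Lemma~\ref{nsstr2}(1), is an integral combination of the seven given generators.

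I expect the main obstacle to be the integrality bookkeeping for $e_1$, that is, showing that the half-integral expression defines an honest class in $NS(X)$ and that the resulting lattice is saturated rather than a proper finite-index sublattice of $K$. This is where the global structure $NS(X)=U\oplus D_4^{\oplus 2}\oplus D_7$, the divisibility relations coming from Lemma~\ref{nsstr1}, and the discriminant-group generators of Lemma~\ref{nsstr2}(3) must be used carefully; the eigenvector and Gram-matrix computations themselves are routine once the correct labelling of the $e_i$ by the $E_7$ diagram is in place.
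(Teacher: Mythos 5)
Your overall route coincides with the paper's: check that each of $f,e_2,\dots,e_7$ is of the form $\pm(N_\alpha-T_{\alpha+[123]})$, hence anti-invariant under $\sigma_{123}$; verify the Gram matrix realizes $E_7(2)$ with $\{e_1,\dots,e_7\}$ a root basis (and $f$ the affine node, which is exactly why the coefficients in $e_1$ are the marks); and conclude $K'=K$ by comparison with Proposition \ref{e7}. Your determinant argument is correct and is precisely what the paper's terse ``By Proposition \ref{e7}, they coincide'' amounts to: $\det K'=\det E_7(2)=2^8$ and $\det K'=[K:K']^2\det K$ force index one. (A minor index slip: $[15]\circleddash[123]=[235]=[146]$, not ``$23$'', though your conclusion $T_{146}$ is right.)

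The genuine gap is at the step you yourself flag as the crux: the integrality of $e_1$. You assert that it ``follows from the relation $H=2T_\beta+\sum_{\alpha\in\Lambda(\beta)}N_\alpha$'' of Lemma \ref{nsstr1}, but that relation alone does not deliver it. Substituting it, as the paper does, reduces $e_1$ modulo the $\bb{Z}$-span of $H$, the $N_\alpha$ and the $T_\beta$ to the class $(1/2)\sum_{\alpha\in J(C)_2}N_\alpha$, and Lemma \ref{nsstr1} only certifies $2$-divisibility of classes of the shape $H+\sum_{\alpha\in\Lambda(\beta)}N_\alpha$ (a six-node set plus $H$); it is not evident, and you give no mechanism, that integral combinations of these produce the half-sum of all sixteen nodes. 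The paper closes exactly this point with a separate geometric input: the blow-up $\widehat{J(C)}$ of $J(C)$ at the sixteen $2$-torsion points is a double cover of $X$ branched precisely along $\bigcup_\alpha N_\alpha$, so $\sum_{\alpha\in J(C)_2}N_\alpha$ is $2$-divisible in $NS(X)$ --- the classical even-set property of the Kummer lattice. Without this fact (or an explicit integral expression of the half-sum in terms of the $32$ curves, whose existence is guaranteed abstractly by Lemma \ref{nsstr2}(1) but is not a one-line consequence of Lemma \ref{nsstr1}), you have not shown $e_1\in NS(X)$, so the inclusion $K'\subseteq K$ on which your determinant comparison rests is unproved. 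Supplying the double-cover divisibility repairs the proof and makes it essentially identical to the paper's.
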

\begin{proof}
We can check that $\{f,e_2,\cdots, e_7\}$ spans a sublattice of $K$
isomorphic to $A_7(2)$. 
We now show $e_1\in NS(X)$. Modulo $NS(X)$,
\begin{eqnarray*}
e_1 &\equiv & (f+e_3+e_5+e_7)/2\\
&\equiv & (N_{15}+N_{45}+N_{12}+N_{24})/2+(T_{146}+T_{6}+T_{3}+T_{134})/2\\
&=& (N_{15}+N_{45}+N_{12}+N_{24})/2+(\frac{H}{2}\cdot 4 -\frac{1}{2}
\sum_{\alpha\in \Lambda([146])\cup\Lambda([6])\cup\Lambda([3])\cup\Lambda([134])} N_{\alpha})/2\\
&=& (N_{15}+N_{45}+N_{12}+N_{24})/2+H-\frac{1}{4}
\sum_{\alpha\in J(C)_2-\{[15],[45],[12],[24]\}} 2N_{\alpha}\\
&\equiv & \sum_{\alpha\in J(C)_2} N_{\alpha}/2.
\end{eqnarray*}
The blow up $\widehat{J(C)}$ of $J(C)$ at points of $J(C)_2$ is 
the double cover of $X$ branched exactly over $\cup_{\alpha} N_{\alpha}$.
Thus $e_1\in NS(X)$ follows.

Then it is easy to check that $\{e_1,e_2,\cdots, e_7\}$ spans a sublattice of $K$
isomorphic to $E_7(2)$. By Proposition \ref{e7}, they coincide.
\end{proof}
\begin{prop}\label{result1}
The patching subgroup of $\sigma_{123}$ is the cyclic group generated by the element 
$[x=H/4+(N_{0}+N_{12}+N_{23}+N_{31})/2]\in A_{NS(X)}$.
\end{prop}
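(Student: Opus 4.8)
The plan is to read off $\Gamma_{\sigma_{123}}$ from Lemma~\ref{gamma} together with the order count coming from Proposition~\ref{e7}. Since $[N:K\oplus T]=\#\Gamma=4$, and $\Gamma_{\sigma_{123}}$ corresponds to $\Gamma_{T}$ under the isomorphism $A_{NS(X)}\xrightarrow{\sim}A_{T}$ of Definition~\ref{df}, the group $\Gamma_{\sigma_{123}}$ has order $4$. Hence it is enough to prove two things: that the class $[x]$ with $x=H/4+\tfrac12\sum_{\alpha\in R}N_{\alpha}$, where $R=\{0,12,23,31\}$, lies in $\Gamma_{\sigma_{123}}$, and that it has order $4$ in $A_{NS(X)}$; the equality $\Gamma_{\sigma_{123}}=\langle[x]\rangle$ then follows at once.

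To show $[x]\in\Gamma_{\sigma_{123}}$ I would apply Lemma~\ref{gamma} with the witness obtained by subtracting off the integral part $A:=\sum_{\alpha\in R}N_{\alpha}\in NS(X)$, namely
\[ y:=x-A=\tfrac14 H-\tfrac12 A . \]
Then $x-y=A\in NS(X)$ holds by construction, so everything reduces to checking $y\in K^{*}$. As $x$ and $A$ both pair integrally with $NS(X)\supset K$, the vector $y$ pairs integrally with $K$; the only substantive point is that $y$ lies in $K_{\bb{Q}}$, i.e. in the $(-1)$-eigenspace of $\sigma_{123}$. This is where one must use that the switch does \emph{not} fix the polarization: expanding $H=2T_{123}+\sum_{\alpha\in\Lambda([123])}N_{\alpha}$ via Lemma~\ref{nsstr1} and applying $\sigma_{123}$ gives $\sigma_{123}(H)=3H-\sum_{\alpha}N_{\alpha}$, while $\sigma_{123}(A)=\sum_{\alpha\in R}T_{\alpha+[123]}=2H-A-\tfrac12\sum_{\alpha}N_{\alpha}$. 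Substituting yields $\sigma_{123}(y)=-y$, so $y\in K_{\bb{Q}}\cap NS(X)^{*}=K^{*}$, and Lemma~\ref{gamma} gives $[x]\in\Gamma_{\sigma_{123}}$.

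For the order I would pass to the explicit generators of $A_{NS(X)}$ in Lemma~\ref{nsstr2}(3). Modulo $NS(X)$ one has $[x]-\msf{g}=\tfrac12(N_{23}+N_{31}-N_{16}-N_{26})\equiv\tfrac12(N_{16}+N_{26}+N_{13}+N_{23})=\msf{e}_1+\msf{f}_2$, so $[x]=\msf{g}+\msf{e}_1+\msf{f}_2$ (this also re-proves $x\in NS(X)^{*}$). Since $\msf{g}$ has order $4$ while $\msf{e}_1,\msf{f}_2$ are $2$-torsion, $2[x]=2\msf{g}\neq0$ and $4[x]=0$, so $[x]$ has order exactly $4$. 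Being a cyclic subgroup of order $4$ inside the order-$4$ group $\Gamma_{\sigma_{123}}$, $\langle[x]\rangle$ exhausts it, which is the assertion.

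The main obstacle is precisely the verification that $y\in K_{\bb{Q}}$. Assuming (incorrectly) that $\sigma_{123}$ fixes $H$ makes $x+\sigma_{123}(x)$ acquire quarter-integral coordinates in the basis $\{H,N_{\alpha}\}$ and falsely suggests that no integral $w$ with $w+\sigma_{123}(w)=x+\sigma_{123}(x)$ exists, i.e. that $[x]\notin\Gamma_{\sigma_{123}}$. The correct action $\sigma_{123}(H)=3H-\sum_{\alpha}N_{\alpha}$, together with the relation $\tfrac12\sum_{\alpha}N_{\alpha}\in NS(X)$ already used in identifying $K$, produces exactly the cancellation $\sigma_{123}(y)=-y$; once this is in hand, the remainder is routine bookkeeping in $A_{NS(X)}$.
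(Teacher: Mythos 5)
Your proof is correct, and while it shares the paper's overall skeleton --- Lemma \ref{gamma}, the count $\#\Gamma_{\sigma}=[N:K\oplus T]=4$ from Proposition \ref{e7}, and the order-$4$ computation for $[x]$ --- it executes the one substantive step by a genuinely different route. The paper builds its witness $y=-(e_1+e_5+e_7)/4+e_5/2+e_6/2$ out of the explicit $E_7(2)$-basis of $K$ computed in the proposition immediately preceding, so $y\in K^*$ is essentially manifest, and the real check is $x-y\in NS(X)$, settled via $x-y\equiv T_{123}-T_4\equiv 0$. You invert the burden: taking $y=x-A$ with $A=\sum_{\alpha\in R}N_{\alpha}$ makes $x-y\in NS(X)$ trivial, and the work goes into $y\in K^*$, which you split into integrality of pairings (from $x\in NS(X)^*$, $A\in NS(X)$) and $y\in K_{\bb{Q}}$ via the eigenvalue identity $\sigma_{123}(y)=-y$. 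Your two action formulas are correct and verifiable from the stated data: applying the interchange $N_{\alpha}\leftrightarrow T_{\alpha+[123]}$ to $H=2T_{123}+\sum_{\alpha\in\Lambda([123])}N_{\alpha}$ gives $\sigma_{123}(H)=2N_0+\sum_{i=1}^{6}T_i=3H-\sum_{\alpha}N_{\alpha}$, and likewise $\sigma_{123}(A)=T_{123}+T_1+T_2+T_3=2H-A-\tfrac12\sum_{\alpha}N_{\alpha}$, both consistent with the $(16)_6$ intersection numbers; the cancellation $\sigma_{123}(y)=-y$ then falls out exactly as you say. What your route buys is independence from the explicit $E_7(2)$ generators of $K$ (you never need the preceding proposition); what it costs is having to compute the switch's action on $H$ and $A$, which the paper's choice of witness avoids. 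Your final identification $[x]=\msf{e}_1+\msf{f}_2+\msf{g}$ agrees with the paper's table for $\beta=[123]$ in Section \ref{switch}, which is a good consistency check.

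Two small tidying points. First, you invoke $x\in NS(X)^*$ (to get integral pairings for $y$) one paragraph before you actually establish it via the generator expression; reordering, or checking $(x,N_{\alpha}),(x,T_{\beta})\in\bb{Z}$ directly from Lemma \ref{nsstr2}(1), removes the apparent circularity. Second, your order argument implicitly uses that the five generators of Lemma \ref{nsstr2}(3) are independent, so that $2[x]=2\msf{g}\neq 0$; this is forced since $\#A_{NS(X)}=2^6$ while five generators of orders $(2,2,2,2,4)$ generate a group of order at most $2^6$, but it is even quicker to note $2x\equiv H/2 \pmod{NS(X)}$ and $H/2\notin NS(X)$ because $(H/2)^2=1$ is odd whereas $NS(X)$ is even.
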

\begin{proof}
The facts $x\in NS(X)^*$ and $y:=-(e_1+e_5+e_7)/4+e_5/2+e_6/2\in K^*$ are easily checked.
We use Lemma \ref{gamma}. We first check $x-y\in NS(X)$. This is because 
\begin{eqnarray*}
y &=& (1/8)(f+2e_2+3e_3+4e_4+5e_5+6e_6-e_7)\\
 &=& H/4 -(N_{14}+N_{24}+N_{34}+N_{56})/2
\end{eqnarray*}
and 
\begin{eqnarray*}
x-y &=& (1/2)(N_{0}+N_{12}+N_{23}+N_{31}+N_{14}+N_{24}+N_{34}+N_{56})\\
&\equiv& T_{123}-T_{4} \equiv 0.
\end{eqnarray*}
Thus $[x]\in \Gamma_{\sigma_{123}}$. Then since $[x]$ is of order $4$ in 
$A_{NS(X)}$ and $\#\Gamma_{\sigma_{123}}=4$, $\Gamma_{\sigma_{123}}$ is generated 
by $[x]$.
\end{proof}
{\obs} In the expression of $[x]$, $\{n_{0},n_{12},n_{23},n_{31}\}$ is a 
Rosenhain subgroup defined in Section \ref{166}. The class of $-x$ can be
written as $[H/4+(N_0+N_{45}+N_{56}+N_{64})/2]$, where $\{n_{0},n_{45},n_{56},n_{64}\}$
is also a Rosenhain subgroup.
In general, for an even theta characteristic $\beta$, the $6$-set $\Lambda (\beta)$
(see Lemma \ref{nsstr1}) can be uniquely written in the form $R_1\circleddash R_2$
where $R_i$ are Rosenhain subgroups. In our case $\beta=[123]$, 
$R_1=\{n_{0},n_{12},n_{23},n_{31}\}$ and $R_2=\{n_{0},n_{45},n_{56},n_{64}\}$.
\begin{prop}\label{sec}
The patching subgroup of $\sigma_{\beta}$ for general $\beta$ is generated by 
$[H/4+(\sum_{\alpha\in R}N_{\alpha})/2]$ where $R$ is one of the 
two Rosenhain subgroups corresponding to $\beta$.
\end{prop}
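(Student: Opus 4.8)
The plan is to transport the computation of $\Gamma_{\sigma_{123}}$ carried out in Proposition \ref{result1} to an arbitrary even theta characteristic $\beta$ by exploiting the $\mf{S}_6$-symmetry of the $(16)_6$ configuration. The essential point to keep in mind throughout is that a permutation $g\in\mf{S}_6$ need \emph{not} lift to an automorphism of $X$, so I cannot simply declare $\sigma_\beta$ and $\sigma_{123}$ conjugate in $\Aut(X)$ and quote Proposition \ref{invariance}. Instead I would work entirely inside the lattice $NS(X)$. Since $g$ permutes the configuration curves $\{N_\alpha,T_\gamma\}$ while preserving all intersection numbers, and these curves generate $NS(X)$ over $\bb{Z}$ by Lemma \ref{nsstr2}(1), $g$ induces a well-defined isometry of $NS(X)$, which I denote again by $g$.

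First I would reduce to $\beta=[123]$. Even theta characteristics correspond to $3$-element subsets of $\{1,\dots,6\}$ up to complement, and $\mf{S}_6$ acts transitively on these, so I may choose $g\in\mf{S}_6$ with $g([123])=\beta$. Next I claim that, as isometries of $NS(X)$,
\[ g\,\sigma_{123}\,g^{-1}=\sigma_\beta. \]
Both sides are determined by their values on the generating set $\{N_\alpha,T_\gamma\}$. Because $g$ acts on the index set $J(C)_2\cup S(C)$ compatibly with the symmetric-difference operation $\circleddash$ recalled in Section \ref{166}, one computes $g\sigma_{123}g^{-1}(N_\alpha)=T_{\alpha\circleddash g([123])}=T_{\alpha+\beta}=\sigma_\beta(N_\alpha)$, and likewise on the $T_\gamma$. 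Consequently $g$ carries the $(-1)$-eigenspace $K_{123}$ of $\sigma_{123}$ isometrically onto the $(-1)$-eigenspace $K_\beta$ of $\sigma_\beta$.

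It then remains to push the generator through $g$. The patching subgroup is functorial: since $g$ is an isometry of $NS(X)$ with $g(K_{123})=K_\beta$, it maps $K_{123}^*$ onto $K_\beta^*$ and fixes $NS(X)^*$, so the description in Lemma \ref{gamma} yields at once $g(\Gamma_{\sigma_{123}})=\Gamma_{\sigma_\beta}$ inside $A_{NS(X)}$. Now I apply $g$ to the generator $x=H/4+(N_0+N_{12}+N_{23}+N_{31})/2$ of Proposition \ref{result1}. The class $H$ is fixed by $g$: it is independent of the trope used to express it (Lemma \ref{nsstr1}), hence invariant under every configuration automorphism. The node set $R_1=\{0,12,23,31\}$ is a Rosenhain subgroup, and since $g$ preserves the symplectic form on $J(C)_2$ it sends $R_1$ to another Rosenhain subgroup $gR_1$; by uniqueness in the Observation above, $gR_1$ is exactly one of the two subgroups associated with $\beta$, their symmetric difference being $gR_1\circleddash gR_2=g\Lambda([123])=\Lambda(\beta)$. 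Therefore $\Gamma_{\sigma_\beta}$ is generated by $[H/4+(\sum_{\alpha\in gR_1}N_\alpha)/2]$, which is the asserted form; passing from $x$ to $-x$ (equivalently, choosing $g$ so that $R_1$ lands on the other subgroup) produces the same cyclic group.

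The main obstacle, and the reason the argument must be phrased lattice-theoretically rather than via conjugacy in $\Aut(X)$, is precisely the non-liftability of $\mf{S}_6$. One has to be certain that the only input needed is the action of $\sigma_\beta$ on $NS(X)$, and that this action is completely pinned down by the configuration permutation $N_\alpha\leftrightarrow T_{\alpha+\beta}$. Once that is granted, the displayed identity $g\sigma_{123}g^{-1}=\sigma_\beta$ and the functoriality of Lemma \ref{gamma} do all the work, and the two residual verifications — that $g$ fixes $H$ and maps Rosenhain subgroups to Rosenhain subgroups — are immediate from Lemma \ref{nsstr1} and the $\mf{S}_6$-invariance of the symplectic form, respectively.
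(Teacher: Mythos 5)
Your proposal is correct and takes essentially the same route as the paper: the paper's own proof of Proposition \ref{sec} is just a two-line appeal to the compatibility of the $\mf{S}_6$-action with the Observation preceding it, and your argument is that same reduction with the implicit details made explicit. Your care about the non-liftability of $\mf{S}_6$ to $\Aut(X)$ --- working with the induced isometry of $NS(X)$, verifying $g\,\sigma_{123}\,g^{-1}=\sigma_{\beta}$ on the generating curves, and transporting the patching subgroup through Lemma \ref{gamma} --- is exactly what the paper's terse proof tacitly relies on (note the paper itself uses such $\mf{S}_6$-transport repeatedly, e.g.\ in Sections \ref{switch}--\ref{reye}).
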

\begin{proof} When $\beta=[123]$, this is Proposition \ref{result1}.
Since the action of $\mf{S}_6$ is compatible with the observation above,
the general case follows.
\end{proof}

By Proposition \ref{sec}, we can write down the generator of the 
patching subgroup of the switch $\sigma_{\beta}$ for all $\beta$.
We use the notations of 
Lemma \ref{nsstr2}.

\begin{center}
\begin{tabular}{c||c|c|c|c|c}
$\beta$     & $[123]$ & [124] & [125] & [126] & [134] \\ \hline
& $\msf{e}_1+\msf{f}_2+\msf{g}$ & $\msf{e}_2+\msf{f}_1+\msf{g}$ &
 $\msf{e}_1+\msf{f}_1+\msf{e}_2+\msf{f}_2+\msf{g}$ & $\msf{g}$ &
 $\msf{f}_1+\msf{f}_2+\msf{g}$
\end{tabular}
\smallskip
\begin{tabular}{c||c|c|c|c|c}
$\beta$     & [135] & [136] & [145] & [146] & [156]  \\ \hline
& $\msf{f}_1+\msf{g}$ & $\msf{e}_1+\msf{g}$ & $\msf{f}_2+\msf{g}$ &
 $\msf{e}_2+\msf{g}$ &$\msf{e}_1+\msf{e}_2+\msf{g}$ 
\end{tabular}
\end{center}
Since all these are distinct each other, we see that 
the ten switches are not conjugate each other in $\Aut (X)$
if $X$ is Picard-general.

\section{Hutchinson's involutions associated with G\"{o}pel tetrads}

These
automorphisms appear in \cite{hut01}. The generic freeness is found by 
J. H. Keum in \cite{keum90}. We briefly recall the construction.
Let $G$ be a G\"{o}pel tetrad. If we choose $G$ as the reference points of the
homogeneous coordinates of $\bb{P}^3$, the equation of $\overline{X}$ 
becomes
\begin{eqnarray*}
A(x^2 t^2+y^2 z^2)+B(y^2 t^2+z^2 x^2)+C(z^2 t^2+x^2 y^2)+Dxyzt\\
+E(yt+zx)(zt+xy)+G(zt+xy)(xt+yz)+H(xt+yz)(yt+zx)=0,
\end{eqnarray*}
for suitable scalars $A,\cdots,H$.
$\sigma_G$ is the Cremona involution 
\[(x,y,z,t) \mapsto (1/x,1/y,1/z,1/t).\]
For a translation $t=t_{\alpha}$, we have $\sigma_{t(G)}=t\sigma_G t$, 
so that we can restrict ourselves to the case $G$ is a G\"{o}pel subgroup.
But any G\"{o}pel subgroup is of the form $\{n_{0},n_{ij},n_{kl},n_{mn}\}$
hence up to $\mf{S}_6$ we can assume 
$G_0=\{n_{0},n_{12},n_{34},n_{56}\}$.
By \cite{keum97}, the induced action of $\sigma_{G_0}$ on $NS(X)$ is given by
\begin{gather*}
N_{\alpha}\leftrightarrow H-N_{0}-N_{12}-N_{34}-N_{56}+N_{\alpha},\quad {\textrm{for\ }}
\alpha =[0],[12],[34],[56]\\
T_1\leftrightarrow T_2,\ T_3\leftrightarrow T_4,\ T_5\leftrightarrow T_6, \\
T_{134}\leftrightarrow T_{234},\ T_{123}\leftrightarrow T_{124},\ T_{125}\leftrightarrow T_{126}.
\end{gather*}
\begin{prop}
The $(-1)$-eigenspace $K$ of $\sigma_{G_0}$ is generated over $\bb{Z}$ by the following elements.
\begin{gather*}
g=N_{0}+N_{12}+N_{34}+N_{56}-H,
e_5=T_1-T_2,\ e_1=T_3-T_4,\ e_7=T_5-T_6, \\
f=T_{134}-T_{234},\ e_3=T_{123}-T_{124},\ h=T_{125}-T_{126},\\
e_2=(1/2)(f+g+h-e_3),e_4=(1/2)(f-e_1-e_3-e_5),\\ 
e_6=(1/2)(f+h-e_5-e_7).
\end{gather*}
\end{prop}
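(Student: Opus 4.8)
The plan is to follow the same three-step pattern used in the switch case. First I would read off the integral $(-1)$-eigenvectors directly from the explicit description of $\sigma_{G_0}$ on $NS(X)$ recalled just above the statement: whenever $\sigma_{G_0}$ interchanges two curves $D \leftrightarrow D'$ the difference $D - D'$ satisfies $\sigma_{G_0}(D-D') = -(D-D')$ and so lies in $K$. The orbit $\{N_0, N_{12}, N_{34}, N_{56}\}$ gives $N_\alpha - \sigma_{G_0}(N_\alpha) = N_0 + N_{12} + N_{34} + N_{56} - H = g$ for each of the four indices, and the six interchanged pairs of tropes give $e_5 = T_1 - T_2$, $e_1 = T_3 - T_4$, $e_7 = T_5 - T_6$, $f = T_{134} - T_{234}$, $e_3 = T_{123} - T_{124}$ and $h = T_{125} - T_{126}$. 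These seven classes manifestly lie in $NS(X)$ and in the $(-1)$-eigenspace.

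The heart of the argument is the second step: showing that the three half-integral combinations $e_2, e_4, e_6$ actually lie in $NS(X)$ and not merely in $NS(X)_{\bb{Q}}$. Each of them is by construction a rational combination of the integral eigenvectors $f, g, h, e_1, e_3, e_5, e_7$, so it automatically belongs to the $(-1)$-eigenspace; it therefore suffices to prove integrality. For this I would reduce each $e_{2i}$ modulo $NS(X)$ exactly as in the switch computation, using Lemma \ref{nsstr1} to trade every trope $T_\beta$ for $\tfrac12(H - \sum_{\alpha \in \Lambda(\beta)} N_\alpha)$, together with the fact that the blow-up $\widehat{J(C)}$ is the double cover of $X$ branched over $\bigcup_{\alpha} N_\alpha$, which yields the integrality relation $\tfrac12 \sum_{\alpha \in J(C)_2} N_\alpha \in NS(X)$. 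Substituting these into $\tfrac12(f+g+h-e_3)$, $\tfrac12(f - e_1 - e_3 - e_5)$ and $\tfrac12(f + h - e_5 - e_7)$ and simplifying should collapse each expression to a manifestly integral class. I expect this to be the main obstacle, since it is the only place where the specific combinatorics of the chosen G\"{o}pel subgroup $G_0$ enters and the symmetric-difference index calculus must be carried out carefully to confirm the cancellations.

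Finally, with all the listed classes shown to lie in $K$, I would compute the Gram matrix of $\{e_1, \ldots, e_7\}$ and check that it equals the Cartan matrix of $E_7$ scaled by $2$, with each $e_i^2 = -4$ and $e_i \cdot e_j = 2$ exactly along the edges of the $E_7$ Dynkin diagram; in particular these seven classes are linearly independent and span a sublattice isometric to $E_7(2)$. By Proposition \ref{e7} we already know $K \simeq E_7(2)$, so this sublattice has full rank $7$ in $K$ and the same discriminant; hence the index $[K : \langle e_1, \ldots, e_7 \rangle]$ is $1$ and the two lattices coincide. The remaining named classes $f, g, h$ are then automatically recovered from the defining relations of $e_2, e_4, e_6$, so the ten listed elements generate $K$ over $\bb{Z}$, as claimed.
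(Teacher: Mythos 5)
Your first two steps reproduce the paper's proof exactly: the paper likewise takes the differences $D-\sigma_{G_0}(D)$ of the interchanged classes as the seven integral generators, and establishes $e_2,e_4,e_6\in NS(X)$ by reducing modulo $NS(X)$ with Lemma \ref{nsstr1} and the relation $\frac{1}{2}\sum_{\alpha\in J(C)_2}N_{\alpha}\in NS(X)$ coming from the double cover $\widehat{J(C)}\rightarrow X$ (for instance $e_2\equiv 2H+N_0-\frac{1}{2}\sum_{\alpha}N_{\alpha}$), just as you propose.

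Your final step, however, would fail as literally stated. The seven integral generators $f,g,h,e_1,e_3,e_5,e_7$ are \emph{mutually orthogonal} of square $-4$ (the paper records this as: they span a copy of $A_1(2)^{\oplus 7}$, not of $A_7(2)$ as in the switch case), and consequently $\{e_1,\dots,e_7\}$ is \emph{not} a simple-root basis: for example $e_2\cdot e_4=\frac{1}{4}\bigl((f+g+h-e_3)\cdot(f-e_1-e_3-e_5)\bigr)=\frac{1}{4}\bigl(f^2+e_3^2\bigr)=-2$, and similarly $e_2\cdot e_6=e_4\cdot e_6=-2$, alongside $e_2\cdot e_3=e_3\cdot e_4=e_4\cdot e_5=e_1\cdot e_4=e_5\cdot e_6=e_6\cdot e_7=2$. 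The graph of nonzero pairings has nine edges and contains the triangle $\{e_2,e_4,e_6\}$, so no sign changes or relabeling can turn the Gram matrix into $2$ times the ($6$-edge, tree-shaped) Cartan matrix of $E_7$; if you set out to ``check'' that identity you would get stuck. What rescues your argument is precisely the discriminant comparison you invoke at the end, carried out on the actual lattice: $\langle e_1,\dots,e_7\rangle$ contains $L'=\langle f,g,h,e_1,e_3,e_5,e_7\rangle\simeq A_1(2)^{\oplus 7}$ with index $2^3$ (the classes of $e_2,e_4,e_6$ in $\frac{1}{2}L'/L'$ are independent mod $2$), hence has determinant $-2^{14}/2^6=-2^8=\det E_7(2)$; being a full-rank sublattice of $K\simeq E_7(2)$ (Proposition \ref{e7}) of the same determinant, it equals $K$, and $f,g,h$ are recovered from the defining relations as you say. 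Replace the Cartan-matrix check by this index/determinant computation and your proof coincides with the paper's.
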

\begin{proof}
$e_1,e_3,e_5,e_7,f,g,h\in K$ generate 
a sublattice of $K$ isomorphic to $A_1(2)^{\oplus 7}$.
It is easy to see that $e_2,e_4,e_6\in NS(X)$. For example, modulo $NS(X)$,
\begin{eqnarray*}
e_2&\equiv& (1/2)(H+N_{0}+N_{12}+N_{34}+N_{56}\\
& & + T_{123}+T_{124}+T_{125}+T_{126}+T_{134}+T_{234}) \\
&=& 2H+N_0-(1/2)\sum_{\alpha\in J(C)_2} N_{\alpha}.
\end{eqnarray*}
and as in Section \ref{switch} $e_2\in NS(X)$. $e_4,e_6$ are similar.

Then we see that
$e_1,\cdots, e_7$ span 
a lattice isomorphic to $E_7(2)$.
\end{proof}
\begin{prop}\label{result2}
The patching subgroup of $\sigma_{G_0}$ is $2$-elementary abelian and generated by
\[x=(N_0+N_{12}+N_{34}+N_{56})/2, \text{ and }y=H/2.\]
\end{prop}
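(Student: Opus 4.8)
The plan is to follow the template of Proposition \ref{result1}, using Lemma \ref{gamma} to detect membership in the patching subgroup and then a counting argument to pin the group down. First I would check that $x=(N_0+N_{12}+N_{34}+N_{56})/2$ and $y=H/2$ lie in $NS(X)^*$. For $y$ this is immediate from $H\cdot N_\alpha=0$, $H^2=4$ and $H\cdot T_\beta=2$ (the last from Lemma \ref{nsstr1}). For $x$ the only pairings in question are $x\cdot T_\beta=\tfrac12\#(\Lambda(\beta)\cap G_0)$, and these are integers because the G\"opel tetrad $G_0$, being totally isotropic, meets every trope set $\Lambda(\beta)$ in an even number of nodes.

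Next I would verify $[x],[y]\in\Gamma_{\sigma_{G_0}}$ via Lemma \ref{gamma}. The key simplification is that $x-y=(N_0+N_{12}+N_{34}+N_{56}-H)/2=g/2$, where $g=N_0+N_{12}+N_{34}+N_{56}-H$ is the first listed generator of $K\simeq E_7(2)$. Since the form of $E_7(2)$ takes all its values in $2\bb{Z}$, we have $\tfrac12 K\subseteq K^*$, so $g/2\in K^*$; hence $[x]-[y]=[g/2]\in\Gamma_{\sigma_{G_0}}$ and, more usefully, $[x]$ and $[y]$ have the same membership status. It therefore suffices to place one of them, say $[y]$, in the patching subgroup by exhibiting $w\in K^*$ with $H/2-w\in NS(X)$. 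Here I would write $w$ as an explicit combination of $e_1,\dots,e_7$ (the supply $\tfrac12 K\subseteq K^*$, together with the order-$4$ class spanning the $\bracket{1/4}$-part of $A_K\simeq u(2)^{\oplus3}\oplus\bracket{1/4}$, gives enough room) and reduce $H/2-w$ modulo $NS(X)$ exactly as in Proposition \ref{result1}: rewrite the tropes through $2T_\beta=H-\sum_{\alpha\in\Lambda(\beta)}N_\alpha$ and then collapse the resulting half-sum of nodes using the branch-divisor identity $\sum_\alpha N_\alpha/2\in NS(X)$.

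Finally I would show that $[x]$ and $[y]$ generate a group of order $4$ and conclude. Both classes are $2$-torsion, since $2x=N_0+N_{12}+N_{34}+N_{56}\in NS(X)$ and $2y=H\in NS(X)$. They are nonzero and distinct by parity: $(H/2)^2=1$ and $(g/2)^2=-1$ are odd while $NS(X)$ is even, so $H/2\notin NS(X)$ and $g/2\notin NS(X)$, giving $[y]\neq 0$ and $[x]\neq[y]$; and $[x]\neq 0$ because $b([x],\msf{e}_1)=x\cdot\msf{e}_1=\tfrac14 N_{12}^2=-\tfrac12\equiv\tfrac12\pmod{\bb{Z}}$ with $\msf{e}_1$ as in Lemma \ref{nsstr2}. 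Hence $\langle[x],[y]\rangle\cong(\bb{Z}/2\bb{Z})^2$ has order $4$. By Proposition \ref{e7} we have $\#\Gamma_{\sigma_{G_0}}=[N:K\oplus T]=4$, so $\langle[x],[y]\rangle=\Gamma_{\sigma_{G_0}}$, and it is $2$-elementary.

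The main obstacle is the single membership verification in the second step: producing the explicit $w\in K^*$ and confirming that $H/2-w$ lands in $NS(X)$. Everything else — the pairing checks, the orders, and the nonvanishing — is routine bookkeeping, and the observation that $[x]\in\Gamma_{\sigma_{G_0}}\Leftrightarrow[y]\in\Gamma_{\sigma_{G_0}}$ through $g/2\in K^*$ halves the work. I expect this to be the crux because no amount of abstract form-matching (knowing $\Gamma_T$ only up to isometry, as in Section \ref{invariants}) determines the actual position of $\Gamma_{\sigma_{G_0}}$ inside $A_{NS(X)}$; that position is genuinely computed through $K$ and Lemma \ref{gamma}.
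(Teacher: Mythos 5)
Your strategy is the paper's own: detect membership via Lemma \ref{gamma} by exhibiting elements of $K^*$, then count, using $\#\Gamma_{\sigma_{G_0}}=[N:K\oplus T]=4$ from Proposition \ref{e7}, to conclude that the two classes generate everything. Two of your refinements are genuine economies over the paper. First, the observation that $x-y=g/2$ with $g=N_0+N_{12}+N_{34}+N_{56}-H\in K$, so that $[x]\in\Gamma_{\sigma_{G_0}}\Leftrightarrow[y]\in\Gamma_{\sigma_{G_0}}$ and only one membership needs verifying; the paper instead checks two, via $x'=(e_1+e_3)/2$ and $y'=(e_1+e_5+e_7)/2$. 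Second, your explicit order-$4$ bookkeeping ($[y]\neq0$ and $[x]\neq[y]$ by the parity of $(H/2)^2=1$ and $(g/2)^2=-1$ against the evenness of $NS(X)$; $[x]\neq0$ by pairing with $\msf{e}_1$) is left implicit in the paper, and your isotropy argument that $\#(\Lambda(\beta)\cap G_0)$ is even (the restriction of the theta-characteristic quadratic form to a totally isotropic plane is linear, so has $0$ or $2$ nonzeros) is correct and cleaner than checking cases. However, as written your argument stops exactly at the step you yourself flag as the crux: the element $w\in K^*$ with $H/2-w\in NS(X)$ is promised (``I would write $w$ as an explicit combination of $e_1,\dots,e_7$'') but never produced, so the proof is incomplete at its only substantive point --- precisely the point where the paper's terse proof does supply data.

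The gap is fillable, and by exactly the mechanism you predict. Take $w=(e_1+e_5+e_7)/2=\tfrac12(T_1-T_2+T_3-T_4+T_5-T_6)\in\tfrac12K\subseteq K^*$. Expanding each trope by $2T_\beta=H-\sum_{\alpha\in\Lambda(\beta)}N_\alpha$ and cancelling gives
\begin{equation*}
e_1+e_5+e_7=(N_{24}+N_{26}+N_{46})-(N_{13}+N_{15}+N_{35}),
\end{equation*}
whence, modulo $NS(X)$ (flipping signs on the $N_\alpha/2$ costs only elements of $NS(X)$),
\begin{equation*}
H/2-w\ \equiv\ H/2+\tfrac12\!\!\sum_{\alpha\in\Lambda([135])}\!\!N_\alpha\ =\ H-T_{135}\ \equiv\ 0,
\end{equation*}
since $\{[13],[15],[35],[24],[26],[46]\}=\Lambda([135])$. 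With this one computation supplied, your proof closes and is, in substance, the paper's proof with the $g/2$ shortcut replacing one of its two membership checks.
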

\begin{proof} 
This proposition is proved in the same way as 
Proposition \ref{result1}. The corresponding element in $K^*/K$ 
is $x'=(e_1+e_3)/2, y'=(e_1+e_5+e_7)/2$ and we can check 
$x-x', y-y'\in NS(X)$. Then we use Lemma \ref{gamma}.
\end{proof}
By the $\mf{S}_6$-symmetry, we obtain the following.
\begin{prop}
For any G\"{o}pel subgroup $G$, we have 
$\Gamma_{\sigma_{G}}=\bracket{H/2,(1/2)\sum_{\alpha\in G}N_{\alpha}}.$
\end{prop}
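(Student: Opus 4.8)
The plan is to deduce the statement from the special case $G_0=\{n_0,n_{12},n_{34},n_{56}\}$ already treated in Proposition \ref{result2}, using the $\mf{S}_6$-symmetry of the $(16)_6$ configuration. The essential observation is that the patching subgroup is, by Lemma \ref{gamma}, a purely lattice-theoretic invariant: it is read off from the $(-1)$-eigenspace $K\subset NS(X)$ of the involution, via
\[\Gamma_{\sigma}=\{[x]\in NS(X)^*/NS(X)\mid \exists\,[y]\in K^*/K,\ x-y\in NS(X)\}.\]
Consequently, whenever $g$ is an isometry of $NS(X)$ carrying the eigenspace $K_{G_0}$ of $\sigma_{G_0}$ onto the eigenspace $K_{G}$ of $\sigma_{G}$, the induced map on $A_{NS(X)}$ sends $\Gamma_{\sigma_{G_0}}$ isomorphically onto $\Gamma_{\sigma_{G}}$. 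So it suffices to produce such a $g$ inside $\mf{S}_6$.

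First I would recall that $\mf{S}_6$ acts on $NS(X)$ by isometries preserving the $(16)_6$ configuration (Section \ref{166}), permuting the nodes by $N_\alpha\mapsto N_{g\alpha}$ and the tropes accordingly, and that this action extends naturally to $A_{NS(X)}$. G\"opel subgroups are exactly the subsets $\{n_0,n_{ij},n_{kl},n_{mn}\}$ indexed by the partitions of $\{1,\dots,6\}$ into three pairs, and $\mf{S}_6$ acts transitively on the $15$ such partitions; hence I can choose $g\in\mf{S}_6$ with $g(G_0)=G$.

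Next I would check the equivariance $g\,(\sigma_{G_0})_*\,g^{-1}=(\sigma_{G})_*$ on $NS(X)$. Keum's description exhibits the action of $\sigma_{G_0}$ on the $32$ curves $\{N_\alpha,T_\beta\}$ as a combinatorial rule --- a permutation together with the shifts $N_\alpha\mapsto H-(\sum_{\gamma\in G_0}N_\gamma)+N_\alpha$ on the four nodes of $G_0$ --- and this rule is manifestly covariant under relabeling of $\{1,\dots,6\}$. Since $\mf{S}_6$ acts on $NS(X)$ precisely through its action on these curves and on $H$, the element $g$ intertwines the two lattice involutions, giving $g(K_{G_0})=K_{G}$ and hence $g(\Gamma_{\sigma_{G_0}})=\Gamma_{\sigma_{G}}$. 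It then remains to apply $g$ to the generators of Proposition \ref{result2}: by Lemma \ref{nsstr1} the class $H$ is independent of the chosen trope and is therefore $\mf{S}_6$-invariant, so $g(H/2)=H/2$; and since $g$ permutes nodes by their indices with $g(G_0)=G$, it sends $(1/2)\sum_{\alpha\in G_0}N_\alpha$ to $(1/2)\sum_{\alpha\in G}N_\alpha$. Combining these, $\Gamma_{\sigma_{G}}=\bracket{H/2,(1/2)\sum_{\alpha\in G}N_\alpha}$, as claimed.

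The step requiring the most care is this equivariance. Because $\mf{S}_6$ does \emph{not} lift to $\Aut(X)$ in general, I cannot assert that $\sigma_{G_0}$ and $\sigma_{G}$ are conjugate automorphisms and appeal to Proposition \ref{invariance}; the argument must remain at the level of the abstract lattice $NS(X)$ and Lemma \ref{gamma}. The crux is therefore to confirm that the induced action of an HG involution on $NS(X)$ depends only on the combinatorial type of $G$ --- not on the scalars $A,\dots,H$ of its defining Cremona equation --- so that it genuinely transforms covariantly under the index permutation $g$. Once this is secured, the remaining transport of the generators from Proposition \ref{result2} along $g$ is purely formal.
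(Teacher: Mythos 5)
Your proposal is correct and is essentially the paper's own argument: the paper proves the $G_0=\{n_0,n_{12},n_{34},n_{56}\}$ case directly (Proposition \ref{result2}) and then deduces the general case in one line ``by the $\mf{S}_6$-symmetry,'' which is exactly the transport you carry out. Your elaboration --- working at the level of $NS(X)$ via Lemma \ref{gamma} rather than via conjugacy in $\Aut(X)$, since $\mf{S}_6$ need not lift to automorphisms, and noting that Keum's combinatorial description of $(\sigma_G)_*$ is covariant under relabeling --- is a faithful (and more careful) spelling-out of what the paper leaves implicit.
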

More generally, ussing the translation relation $\sigma_{t(G)}=t\sigma_G t$,
the generator above is valid for any G\"{o}pel tetrad.\\

There are $15$ G\"{o}pel subgroups. Under the notations of 
Lemmas \ref{nsstr1} and \ref{nsstr2}, we deduce the following table.
\begin{center}
\begin{tabular}{ll} \hline
The tetrad  & Patching element corresponding to $x$ \\ \hline
$[0]+[12]+[34]+[56]$ &  $\msf{e}_1+\msf{f}_1+\msf{e}_2+\msf{f}_2$ \\
$[0]+[12]+[35]+[46]$ &  $\msf{f}_1+\msf{e}_2$ \\
$[0]+[12]+[36]+[45]$ &  $\msf{e}_1+\msf{f}_2$ \\
$[0]+[13]+[24]+[56]$ &  $\msf{e}_1+\msf{f}_1+2\msf{g}$ \\
$[0]+[13]+[25]+[46]$ &  $\msf{e}_1+\msf{f}_1+\msf{f}_2+2\msf{g}$ \\
$[0]+[13]+[26]+[45]$ &  $\msf{f}_2$ \\
$[0]+[14]+[23]+[56]$ &  $\msf{e}_2+\msf{f}_2+2\msf{g}$ \\
$[0]+[14]+[25]+[36]$ &  $\msf{f}_1+\msf{e}_2+\msf{f}_2+2\msf{g}$ \\
$[0]+[14]+[26]+[35]$ &  $\msf{f}_1$ \\
$[0]+[15]+[23]+[46]$ &  $\msf{e}_1+\msf{e}_2+\msf{f}_2+2\msf{g}$ \\
$[0]+[15]+[24]+[36]$ &  $\msf{e}_1+\msf{f}_1+\msf{e}_2+2\msf{g}$ \\
$[0]+[15]+[26]+[34]$ &  $\msf{f}_1+\msf{f}_2$ \\
$[0]+[16]+[23]+[45]$ &  $\msf{e}_1$ \\
$[0]+[16]+[24]+[35]$ &  $\msf{e}_2$ \\
$[0]+[16]+[25]+[34]$ &  $\msf{e}_1+\msf{e}_2$ \\ \hline
\end{tabular}
\end{center}
Since all these are distinct each other, we see that 
the $15$ Hutchinson involutions are not conjugate each other in $\Aut (X)$
if $X$ is Picard-general.

{\rmk} In \cite{mukai} it is shown that if $(C,G)$ is bielliptic, then
the involution $\sigma_G$ cannot be defined. 

\section{Hutchinson's involutions associated with Weber hexads}\label{reye}

These automorphisms appear in \cite{hut1,hut2}. 
The freeness is found in \cite{dolgachev-keum}.
We fix a Weber hexad $W$. 
Then the linear system $L=|\mca{O}_{\overline{X}}(2)-W|$ with the assigned base 
points at $W$ defines an another quartic model $\overline{X}_W$ of $X$ in $\bb{P}^4$,
\[\overline{X}_W:\ s_1+\cdots+s_5=0,\ \lambda_1/s_1+\cdots+\lambda_5/s_5=0,\]
where $\lambda_i$ are nonzero constants.

$\sigma_W$ is the Cremona involution 
\[\sigma_W: (s_1,\cdots, s_5)\mapsto (\lambda_1/s_1,\cdots, \lambda_5/s_5),\]
preserving $\overline{X}_W$. It is free if $X$ is Picard-general \cite{dolgachev-keum}.
For any translation $t=t_{\alpha}$, we have $\sigma_{t(W)}=t \sigma_W t$
as in the Hutchinson case. Hence we can assume that the Weber hexad 
doesn't contain $n_{0}$. Then recalling (\ref{web}) in Section \ref{166}, 
we have only one Weber hexad
$W_0=\{n_{12},n_{23},n_{31},n_{14},n_{25},n_{36}\}$ up to the action of $\mf{S}_6$.
In the following we discuss this case.

\begin{lem}[\cite{dolgachev-keum}]\label{dk}
$\sigma_{W_0}$ interchanges the following $10$ pairs of smooth rational curves.
\begin{eqnarray*}
(N_{0},T_{123}),(N_{56},T_{1}),(N_{46},T_{2}),(N_{45},T_{3}),(N_{15},T_{124}),\\
(N_{16},T_{134}),(N_{24},T_{125}),(N_{26},T_{146}),(N_{34},T_{136}),(N_{35},T_{236}).
\end{eqnarray*}
\end{lem}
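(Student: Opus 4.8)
The plan is to work entirely on the Weber model $\overline{X}_W$ and to combine a bookkeeping of degrees with the mechanism of the Cremona involution. Write $H_W = 2H - \sum_{\alpha\in W_0}N_\alpha$ for the class of the polarization attached to $L = |\mca{O}_{\overline{X}}(2) - W_0|$ and defining $\phi_W: X\to\overline{X}_W\subset\bb{P}^4$; one has $H_W^2 = 4$, consistent with $\overline{X}_W$ being again a quartic. First I would compute the intersection numbers of $H_W$ with all $32$ curves, using Lemma \ref{nsstr1} to rewrite $H$ and the incidence relations of the $(16)_6$ configuration. This sorts the curves into four packets: the ten nodes $N_\alpha$ with $\alpha\notin W_0$ satisfy $H_W\cdot N_\alpha = 0$ and are contracted by $\phi_W$; exactly ten tropes, namely those $T_\beta$ with $|\Lambda(\beta)\cap W_0| = 3$, satisfy $H_W\cdot T_\beta = 1$ and map to lines, and these turn out to be precisely the ten tropes occurring in the statement; the remaining six $N_\alpha$ ($\alpha\in W_0$) and six tropes have degrees $2$ and $3$. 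In particular the twenty curves relevant to the lemma are singled out intrinsically.

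Second, I would analyse the Cremona involution $\sigma_W:(s_1,\dots,s_5)\mapsto(\lambda_1/s_1,\dots,\lambda_5/s_5)$ on $\overline{X}_W$. A Cremona transformation interchanges the curves it contracts with the curves it creates, and here the degree count $10 = 10$ suggests the clean dichotomy: I would show that the ten lines above are exactly the $(-2)$-curves created by $\sigma_W$, so that $\sigma_{W_0}$ carries each contracted node $N_\alpha$ ($\alpha\notin W_0$) to one of the ten degree-one tropes and conversely. This already proves that $\sigma_{W_0}$ interchanges the set $\{N_\alpha : \alpha\notin W_0\}$ with the ten listed tropes by some bijection $b$, though not yet which node goes to which trope.

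Third --- the real content --- I would pin down $b$. Since $\sigma_{W_0}$ is an automorphism of $X$ it is an isometry of $NS(X)$, so $b$ must preserve all intersection numbers of the $(16)_6$ configuration among the twenty curves (for instance $N_\alpha\cdot N_{\alpha'} = b(N_\alpha)\cdot b(N_{\alpha'})$); this is a very rigid constraint on the underlying incidence graph. To remove the residual ambiguity coming from automorphisms of that graph, I would use equivariance under the subgroup of $(\bb{Z}/2\bb{Z})^5\rtimes\mf{S}_6$ stabilising $W_0$: such configuration automorphisms commute with $\sigma_{W_0}$ through the relation $\sigma_{t(W)} = t\sigma_W t$, so $b$ is equivariant and it suffices to match a few orbit representatives, the remaining phase being fixed by a single explicit computation of the image of one trope in the coordinates $s_i$.

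The step I expect to be the main obstacle is this last one: the passage from the clean degree/Cremona dichotomy to the exact list of pairs. Abstractly one learns only that the degree-zero nodes are swapped with the degree-one tropes; matching a specific $N_\alpha$ with a specific $T_\beta$ requires either an explicit parametrisation of each of these curves in the Weber coordinates $s_1,\dots,s_5$ (in terms of the original coordinates and the hexad $W_0$) followed by applying $s_i\mapsto\lambda_i/s_i$, or the incidence-and-symmetry argument sketched above carried out in full. This is exactly the computation performed in \cite{dolgachev-keum}, whose coordinate setup I would follow to certify the individual pairings.
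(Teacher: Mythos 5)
Your overall strategy is correct and would prove the lemma, but it diverges from the source at the decisive step. The paper itself gives no proof---the lemma is quoted from \cite{dolgachev-keum}---and the argument there (also present in the author's excised draft material in this source) is a single explicit computation: one writes down five divisors $S_1,\dots,S_5\in |2H-\sum_{\alpha\in W_0}N_{\alpha}|$, each a sum of four tropes and six nodes, whose sections give coordinates $s_1,\dots,s_5$ with $\sum s_i=0$ on $\overline{X}_{W_0}$. In these coordinates each of the ten nodes $N_{\alpha}$, $\alpha\notin W_0$, lies on exactly three of the $S_i$ and is contracted to the singular point $P_{ijk}=\{s_i=s_j=s_k=0\}$, while each of the ten listed tropes is the common $T$-component of exactly two of the $S_l,S_m$ and maps to the line $\{s_l=s_m=0\}$; since the Cremona involution $(s_i)\mapsto(\lambda_i/s_i)$ visibly exchanges $P_{ijk}$ with the line $\{s_l=s_m=0\}$ for the complementary index sets $\{i,j,k\}\sqcup\{l,m\}=\{1,\dots,5\}$, all ten pairings are read off simultaneously, with no residual ambiguity. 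Your first two steps agree with this and are correct: indeed $H_W\cdot N_{\alpha}$ is $0$ or $2$ according to $\alpha\notin W_0$ or $\alpha\in W_0$, and $H_W\cdot T_{\beta}=4-\#(\Lambda(\beta)\cap W_0)$ by Lemma \ref{nsstr1}, which singles out exactly the ten tropes of the statement as the lines.

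The one genuine flaw is your justification of equivariance in step three. The relation $\sigma_{t(W)}=t\sigma_W t$ cannot produce it: no nontrivial translation stabilizes $W_0$, and the stabilizer of $W_0$ lies in the $\mf{S}_6$ factor of the configuration group, whose elements in general do \emph{not} lift to automorphisms of $X$ (the paper states this explicitly in Section \ref{166}). What does hold is naturality under relabeling of the Weierstrass points: a permutation $\pi$ stabilizing $W_0$ (here an $\mf{S}_3$ of order $6$, generated by $(123)(456)$ and $(12)(45)$) leaves the pair $(X,W_0)$, and hence the intrinsically defined involution $\sigma_{W_0}$, unchanged while permuting the labels $N_{\alpha},T_{\beta}$, so the bijection $b$ is $\mf{S}_3$-equivariant. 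With this corrected justification your plan does close: the $\mf{S}_3$-orbits on the ten nodes have sizes $1+3+6$ (namely $\{N_0\}$, $\{N_{45},N_{46},N_{56}\}$, and the rest), matching the trope orbits $\{T_{123}\}$, $\{T_1,T_2,T_3\}$, and the remaining six; equivariance forces $b$ uniquely on the first two orbits (e.g.\ $N_0\mapsto T_{123}$, and $N_{45}\mapsto T_3$ since both are the unique fixed points of $(12)(45)$ in their orbits, consistently with the statement), while the two size-$6$ orbits are regular, so, exactly as you anticipate, one explicit coordinate computation is needed to fix the remaining phase, after which equivariance propagates it to all six pairs. So your route works at the cost of the symmetry analysis plus one computation, whereas the source's choice of the divisors $S_i$ certifies every pairing at once.
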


\begin{prop}\label{neg}
The $(-1)$-eigenspace $K$ of $\sigma_{W_0}$ is generated over $\bb{Z}$ by the following elements.
\begin{gather*}
e_1=T_2-N_{46},\ e_2=N_{15}-T_{124},\ e_3=T_1-N_{56},\\
e_4=N_0-T_{123},\ e_5=T_3-N_{45},\ e_6=N_{34}-T_{136},\\
e_7=N_{23}-N_{56}-N_{34}-N_{24}-T_{134}-T_{124}.
\end{gather*}
\end{prop}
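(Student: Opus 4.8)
The plan is to show that the seven listed classes all lie in $K$ and that they generate it, and then to identify the resulting lattice with $E_7(2)$ so as to close the argument via Proposition~\ref{e7}. Each $e_i$ is an integral combination of the $(-2)$-curves $N_\alpha,T_\beta$, hence automatically lies in $NS(X)$; the two things to establish are membership in the $(-1)$-eigenspace $K$ and generation of $K$.

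For $e_1,\dots,e_6$ membership is immediate: each is, up to sign, the difference $N_\alpha-T_{\alpha'}$ of a pair of curves interchanged by $\sigma_{W_0}$ in Lemma~\ref{dk}, so $\sigma_{W_0}^*$ carries it to its negative. The class $e_7$ is the genuinely new generator, and verifying $e_7\in K$ is where I expect the real difficulty to lie. The curve $N_{23}$ appearing in $e_7$ is \emph{not} among the twenty curves of Lemma~\ref{dk}, so $\sigma_{W_0}^*(e_7)$ cannot be read off directly from the interchanges. I would resolve this by computing the image $\sigma_{W_0}^*(N_{23})$ from the full induced action on $NS(X)$ and checking $\sigma_{W_0}^*(e_7)=-e_7$; equivalently, one uses the defining relations of $NS(X)$ in Lemmas~\ref{nsstr1} and~\ref{nsstr2} -- in particular $H=2T_\beta+\sum_{\alpha\in\Lambda(\beta)}N_\alpha$ together with the divisibility $\frac{1}{2}\sum_\alpha N_\alpha\in NS(X)$ -- to rewrite $e_7$ as a $\bb{Z}$-combination of interchange-differences, which then lies in $K$ for free. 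A parity count shows the plain interchange-differences cannot suffice, so the $NS(X)$-relations are genuinely needed here; this step is the main obstacle, and everything after it is bookkeeping.

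Granting $e_1,\dots,e_7\in K$, I would compute the Gram matrix from the $(16)_6$ incidence rules, using $(N_\alpha,T_\beta)=1$ exactly when $\alpha\circleddash\beta\in\{[1],\dots,[6]\}$. One finds $e_i^2=-4$ for every $i$, while the only nonzero off-diagonal products are $(e_1,e_4)=(e_2,e_3)=(e_3,e_4)=(e_4,e_5)=(e_5,e_6)=(e_6,e_7)=2$. The associated graph is therefore the $E_7$ Dynkin diagram, with trivalent vertex $e_4$ and arms $\{e_1\}$, $\{e_3,e_2\}$, $\{e_5,e_6,e_7\}$ of lengths $1,2,3$. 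Hence $\bracket{e_1,\dots,e_7}\cong E_7(2)$; in particular the seven classes are linearly independent and $\left|\det\bracket{e_1,\dots,e_7}\right|=2^8$.

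It remains to upgrade the inclusion $\bracket{e_1,\dots,e_7}\subseteq K$ to an equality. By Proposition~\ref{e7} we have $K\cong E_7(2)$, so $\left|\det K\right|=2^8$ as well; since $\bracket{e_1,\dots,e_7}$ is a rank-$7$ sublattice of $K$ with the same discriminant, the index $[K:\bracket{e_1,\dots,e_7}]$ equals $1$ and the two lattices coincide. Thus the only nonroutine point is the verification that $e_7$ is anti-invariant.
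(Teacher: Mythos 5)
Your plan identifies the right crux --- the anti-invariance of $e_7$ --- but both routes you propose for it fail, and the second one fails for a reason stronger than the ``parity count'' you mention. Rewriting $e_7$ as a $\bb{Z}$-combination of the ten interchange-differences $N_\alpha-T_\beta$ (taken over the pairs of Lemma \ref{dk}) is impossible even over $\bb{Q}$, and no use of the relations $H=2T_\beta+\sum_{\alpha\in\Lambda(\beta)}N_\alpha$ or $\frac{1}{2}\sum_\alpha N_\alpha\in NS(X)$ can repair this, because the obstruction lives in $NS(X)_{\bb{Q}}$ itself, where all relations already hold. Concretely, write classes in the orthogonal $\bb{Q}$-basis $\{H,N_\alpha\}$ of Lemma \ref{nsstr2}, so that $T_\beta=\frac{1}{2}H-\frac{1}{2}\sum_{\alpha\in\Lambda(\beta)}N_\alpha$ by Lemma \ref{nsstr1}, and consider the functional $F(x)=x_H+x_{N_0}+x_{N_{15}}+x_{N_{16}}-x_{N_{56}}$ on coefficients. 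A direct check against the ten pairs of Lemma \ref{dk} shows $F$ vanishes on all ten differences, whereas $e_7=-H+N_{14}+N_{23}+\frac{1}{2}(N_{12}+N_{13}+N_{25}+N_{26}+N_{35}+N_{36})-\frac{1}{2}N_{24}-\frac{1}{2}N_{34}$ has $F(e_7)=-1$. So the ten differences span only a six-dimensional subspace of the seven-dimensional $K_{\bb{Q}}$ (consistent with your own Gram data: deleting $e_7$ from your $E_7$ diagram leaves an $E_6$, and the four differences beyond $\pm e_1,\dots,\pm e_6$ are rational combinations of these six), and $e_7$ lies strictly outside it. Your first route is circular as stated: the ``full induced action'' of $\sigma_{W_0}$ on $NS(X)$ (the Remark following the proposition) is \emph{derived} in the paper from this very proposition; Lemma \ref{dk} pins down $\sigma_{W_0}^*$ only on the rank-$16$ sublattice spanned by its twenty curves, and the sign of $\sigma_{W_0}^*$ on the one missing rational direction is precisely what the claim $e_7\in K$ asserts.

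The missing idea --- and the paper's actual argument --- is dual to yours: use the ten \emph{sums} $N_\alpha+T_\beta$ instead of the differences. A determinant computation shows the ten sums are linearly independent, hence they span over $\bb{Q}$ the full $(+1)$-eigenspace, which has rank $10=17-\rank K$ (here $\rank K=7$ since $\sigma_{W_0}$ is an Enriques involution; cf.\ Proposition \ref{e7}). Since the eigenspaces of an isometry of a nondegenerate lattice are orthogonal, any class of $NS(X)$ orthogonal to all ten sums lies in $K$; and $(e_7,N_\alpha+T_\beta)=0$ for each pair is a routine $(16)_6$-incidence check, e.g.\ $(e_7,N_0+T_{123})=(N_{23},T_{123})-(N_{56},T_{123})=1-1=0$. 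Once $e_7\in K$ is secured this way, the remainder of your argument is correct and matches the paper's conclusion: your Gram matrix is right (norms $-4$, the six listed products equal to $2$, trivalent node $e_4$), so $\bracket{e_1,\dots,e_7}\simeq E_7(2)$ has discriminant $\pm 2^8$, equal to that of $K\simeq E_7(2)$ from Proposition \ref{e7}, forcing index one and hence equality.
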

\begin{proof}
By computing the determinant, we can see that $10$ divisors 
\begin{equation}
N_0+T_{123},\cdots,N_{35}+T_{236}\label{inv}
\end{equation}
from Lemma \ref{dk} span over $\bb{Q}$ the invariant sublattice.
$e_1,\cdots,e_6\in K$ is easy. $e_7\in K$ follows from the fact that $e_7$ is orthogonal 
to all of the divisors in (\ref{inv}). 
Then $e_1,\cdots,e_7$ spans the lattice $E_7(2)\simeq K$.
\end{proof}
{\rmk} The action of $\sigma_{W}$ on $NS(X)$ is very complicated, but essentially
we can write down this action using the proposition above. In fact we find the 
following. 

Let $W$ be a general Weber hexad. The ``degree $1$ part'' $W_1$ of $W$ is 
the set
\[\{\beta\in S(C)| (T_{\beta},\sum_{\alpha\in W}N_{\alpha})=1\}.\]
$W_1$ consists of $6$ elements. We have a natural bijection 
$\mu :W\rightarrow W_1$ defined by $(N_{\alpha},T_{\mu (\alpha)})=1$.
On the other hand, for $\alpha\not\in W$, we have the unique decomposition
\[W=G\circleddash R,\,\, G\cap R=\{n_{\alpha}\}.\] 
Let $R^{\perp}$ be the $2$-dimensional affine subspace of $J(C)_2$ which is orthogonal
to $R$ and contains $n_{\alpha}$.
Then $R\circleddash R^{\perp}$ is a Rosenhain hexad, i.e., $R\circleddash R^{\perp}$
is of the form $\Lambda (\beta)$ for some $\beta\in S(C)$.
This defines a bijection $\mu': J(C)_2-W\rightarrow S(C)-W_1$, $\alpha\mapsto 
\mu'(\alpha)=\beta$. Using these data, the action of $\sigma_W$
is as follows.
\begin{eqnarray*}
\sigma_W (N_{\alpha})= 3H- (\sum_{\alpha\in J(C)_2}N_{\alpha})/2-(\sum_{\alpha\in W}N_{\alpha})
-T_{\mu(\alpha)}, \text{ if } \alpha\in W.\\
\sigma_W (N_{\alpha})=T_{\mu'(\alpha)} \text{ if } \alpha\not\in W.\\
\sigma_W (H)=9H -(\sum_{\alpha\in J(C)_2}N_{\alpha})-4(\sum_{\alpha\in W}N_{\alpha}).
\end{eqnarray*}

\begin{prop}\label{result3}
The patching subgroup of $\sigma_{W_0}$ is cyclic and generated by 
\[x=(3/4)H+(1/2)(N_{12}+N_{23}+N_{31}+N_{14}+N_{25}+N_{36}).\]
\end{prop}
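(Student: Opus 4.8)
The plan is to imitate the proofs of Propositions \ref{result1} and \ref{result2}: produce the patching class directly from the characterization in Lemma \ref{gamma}, namely $[x]\in\Gamma_{\sigma_{W_0}}$ precisely when there is some $[y]\in K^*/K$ with $x-y\in NS(X)$. The first step is therefore to exhibit a suitable $y\in K^*$, written as a $\bb{Q}$-linear combination of the basis $e_1,\dots,e_7$ of $K\simeq E_7(2)$ furnished by Proposition \ref{neg}. A canonical choice is the orthogonal projection of $x$ onto $K_{\bb{Q}}$: because $\{H,N_{\alpha}\}$ is an orthogonal $\bb{Q}$-basis with $H^2=4$ and $N_{\alpha}^2=-2$ (Lemma \ref{nsstr2}), this projection is a routine linear computation, and it automatically lands in $K^*$ since $(y,e_i)=(x,e_i)\in\bb{Z}$ for the lattice vectors $e_i$.

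The substance of the argument is the verification $x-y\in NS(X)$, together with the preliminary check $x\in NS(X)^*$. For the latter I would pair $x$ against the generators $N_{\alpha},T_{\beta}$ of $NS(X)$ from Lemma \ref{nsstr2}, using $(H,N_{\alpha})=0$ and the trope expression $T_{\beta}=(H-\sum_{\alpha\in\Lambda(\beta)}N_{\alpha})/2$ of Lemma \ref{nsstr1}; integrality of $(x,T_{\beta})$ amounts to a parity statement about $\#(\Lambda(\beta)\cap W_0)$, which I would confirm hexad-by-hexad. For $x-y\in NS(X)$ I would expand $x-y$ in the $N_{\alpha},T_{\beta}$ and reduce it to $0$ modulo $NS(X)$ exactly as in Section \ref{switch}, the three tools being the relation of Lemma \ref{nsstr1}, the linear equivalences of the index set recorded in Section \ref{166}, and the half-integral relation $\sum_{\alpha\in J(C)_2}N_{\alpha}/2\in NS(X)$ arising from the double cover $\widehat{J(C)}\to X$ branched over $\bigcup_{\alpha}N_{\alpha}$. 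Once this is done, Lemma \ref{gamma} gives $[x]\in\Gamma_{\sigma_{W_0}}$.

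It remains to check that $[x]$ generates. By Proposition \ref{e7} the index $[N:K\oplus T]=\#\Gamma$ equals $4$, so $\#\Gamma_{\sigma_{W_0}}=4$ and it suffices to show $[x]$ has order $4$ in $A_{NS(X)}$. I would rewrite $x$ using the generators of Lemma \ref{nsstr2}: subtracting the odd multiple $3\msf{g}$, with $\msf{g}=H/4+(N_0+N_{16}+N_{26}+N_{12})/2$, cancels the quarter-integral $3H/4$ and leaves a purely $2$-elementary half-sum of nodes. Thus $[x]=3[\msf{g}]+(\text{a }2\text{-torsion class})$, whence $2[x]=6[\msf{g}]=2[\msf{g}]\neq 0$ since $[\msf{g}]$ has order $4$; so $[x]$ has order $4$ and $\Gamma_{\sigma_{W_0}}=\bracket{[x]}$ is cyclic, as claimed.

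The main obstacle is the bookkeeping in the second paragraph. Unlike the switch and G\"{o}pel cases, where the symmetry keeps the reductions short, the generator $e_7$ of $K$ in Proposition \ref{neg} is a genuine six-term combination of nodes and tropes, so both the projection $y$ and the reduction of $x-y$ to zero in $NS(X)$ demand careful tracking of many $N_{\alpha}$ and $T_{\beta}$. No new idea beyond the $(16)_6$ incidence relations and Lemma \ref{gamma} is needed, but the computation is the longest of the three cases.
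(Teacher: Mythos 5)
Your overall architecture matches the paper's: invoke Lemma \ref{gamma}, exhibit $y\in K^*$ in the basis of Proposition \ref{neg}, verify $x-y\in NS(X)$, and deduce generation from $\#\Gamma_{\sigma_{W_0}}=4$ together with an order-$4$ computation (your reduction $[x]=3[\msf{g}]+(\text{2-torsion})$ is fine, as is the integrality check $x\in NS(X)^*$ via the parity of $\#(\Lambda(\beta)\cap W_0)$, which is indeed always odd here). But your one concrete recipe for $y$ --- the orthogonal projection $x_K$ of $x$ onto $K_{\bb{Q}}$ --- is wrong, and the error is fatal because the verification $x-y\in NS(X)$ is the substance of the proof. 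With $y=x_K$ one has $x-y=x_S=\frac12\bigl(x+\sigma_{W_0}(x)\bigr)$, the component of $x$ in the invariant part $S_{\bb{Q}}=(K_{\bb{Q}})^{\perp}$ of $NS(X)_{\bb{Q}}$. Pair this against $N_0$: Lemma \ref{dk} gives $\sigma_{W_0}(N_0)=T_{123}$, and since $(H,T_\beta)=2$ by Lemma \ref{nsstr1} while $\#(\Lambda([123])\cap W_0)=3$, we get
$(x_S,N_0)=\frac12\bigl((x,N_0)+(x,T_{123})\bigr)=\frac12\bigl(0+\tfrac34\cdot 2+\tfrac12\cdot 3\bigr)=\tfrac32\notin\bb{Z}$,
so $x_S\notin NS(X)$ and your verification step fails at the very first check. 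The same phenomenon already occurs in the case you are imitating: for Proposition \ref{result1} one computes $(x_S,N_0)=\tfrac12$, which is why the paper's $y$ there (and likewise its $y=\frac14e_1+\frac12e_2+\frac12e_4+\frac34e_5+\frac14e_7$ here) is visibly \emph{not} an orthogonal projection.

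The point you are missing is that Lemma \ref{gamma} demands strictly less than $x_S\in NS(X)$. If $x-y=v\in NS(X)$ with $y\in K_{\bb{Q}}$, then projecting gives $v_S=x_S$ and $y=x_K-v_K$; conversely any lattice vector $v\in NS(X)$ with $v_S=x_S$ yields a valid $y:=x-v\in K^*$. So $[x]\in\Gamma_{\sigma_{W_0}}$ iff $x_S$ lies in the \emph{image} of $NS(X)$ under the projection to $S^*$, not in $NS(X)$ itself, and the genuinely nontrivial step --- which your proposal skips --- is the search for such a $v$, equivalently for the correction term $-v_K\in K^*$ by which the true $y$ differs from $x_K$. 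The paper records the outcome of that search as the explicit $y$ above, after which $x-y$ does reduce to $0$ modulo $NS(X)$ by exactly the tools you list (the trope relation of Lemma \ref{nsstr1}, the index-set identities, and $\sum_{\alpha\in J(C)_2}N_{\alpha}/2\in NS(X)$). Those tools are the right ones, but they cannot rescue the canonical choice $y=x_K$: it is provably invalid here, so as written your proof does not go through.
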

\begin{proof}
The corresponding element in $K^*/K$ is 
\[y=\frac{1}{4}e_1+\frac{1}{2}e_2+\frac{1}{2}e_4+\frac{3}{4}e_5+\frac{1}{4}e_7,\]
and we check $x-y\in NS(X)$.
\end{proof}
By the $\mf{S}_6$-symmetry and the translation relation, we obtain 
\begin{prop}
For general $W$, the patching subgroup of $\sigma_W$ is
\[\Gamma_{\sigma_W}=\bracket{(3/4)H+ (\sum_{\alpha\in W}N_{\alpha})/2}.\]
\end{prop}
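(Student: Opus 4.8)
The plan is to bootstrap the single computation of Proposition \ref{result3}, which establishes the formula for the representative hexad $W_0$, to all $W$ by exploiting the transitivity of configuration automorphisms on Weber hexads. Let $G$ be the group of isometries of $NS(X)$ generated by the translations $t_\alpha$ ($\alpha\in J(C)_2$) and by the permutation action of $\mf{S}_6$ on the index set $\{1,\cdots,6\}$; by Lemma \ref{nsstr2}(1) these permutations of $\{N_\alpha,T_\beta\}$ extend to isometries of $NS(X)$, and each of them fixes $H$ (Lemma \ref{nsstr1}). From the classification (\ref{web}) together with the reductions already recorded---a translation carries any hexad to an $n_0$-free one, and an element of $\mf{S}_6$ brings any $n_0$-free hexad to $W_0$---every Weber hexad is of the form $W=\phi(W_0)$ with $\phi\in G$. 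It therefore suffices to transport the equality $\Gamma_{\sigma_{W_0}}=\bracket{x_{W_0}}$, with $x_{W_0}=(3/4)H+(\sum_{\alpha\in W_0}N_\alpha)/2$, along each such $\phi$.

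The decisive step is the lattice-level intertwining relation $\phi\,\sigma_{W_0}\,\phi^{-1}=\sigma_{W}$ in $O(NS(X))$. For the translation part this is exactly the identity $\sigma_{t(W)}=t\,\sigma_W\,t$ recalled at the beginning of this section. For the $\mf{S}_6$ part I would read it off from the explicit, manifestly equivariant description of the action of $\sigma_W$ on $NS(X)$ given in the Remark following Proposition \ref{neg}: the degree-one part $W_1$, the bijections $\mu$ and $\mu'$, and the subspace $R^\perp$ are all defined combinatorially from $W$ alone, so relabeling by $\phi$ turns the formulas for $\sigma_{W_0}(N_\alpha)$ and $\sigma_{W_0}(H)$ into those for $\sigma_{\phi(W_0)}$. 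Granting the intertwining relation, the $(-1)$-eigenspace of $\sigma_{W}$ is $K_{W}=\phi(K_{W_0})$.

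Because $\phi$ is an isometry preserving $NS(X)$, it preserves $NS(X)^*$, and it carries $K_{W_0}^*$ isometrically onto $K_W^*$ inside $NS(X)_{\bb{Q}}$. Hence the membership criterion of Lemma \ref{gamma} is preserved termwise under $\phi$, giving $\phi(\Gamma_{\sigma_{W_0}})\subseteq\Gamma_{\sigma_{W}}$; applying $\phi^{-1}$ upgrades this to an equality of subgroups. Since $\phi$ fixes $H$ and maps the set $W_0$ onto $W$, it sends $x_{W_0}$ to $x_{W}=(3/4)H+(\sum_{\alpha\in W}N_\alpha)/2$, and therefore $\Gamma_{\sigma_{W}}=\phi(\bracket{x_{W_0}})=\bracket{x_{W}}$, which is the assertion.

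I expect the only real obstacle to be the $\mf{S}_6$-case of the intertwining relation. Unlike the translations (and the switches), a generic permutation in $\mf{S}_6$ is not induced by any automorphism of $X$, so Proposition \ref{invariance} is simply unavailable and one cannot argue by conjugacy in $\Aut(X)$; the entire transport must go through the purely lattice-theoretic Lemma \ref{gamma}. This is precisely why the $\mf{S}_6$-equivariance of the combinatorial recipe defining $\sigma_W$ (through $W_1,\mu,\mu',R^\perp$) must be checked explicitly rather than assumed---once it is in place, the remaining verifications are formal.
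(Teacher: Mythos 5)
Your proposal is correct and takes essentially the same route as the paper, which disposes of this proposition in one line (``By the $\mf{S}_6$-symmetry and the translation relation, we obtain\dots''), i.e.\ it transports the $W_0$ computation of Proposition \ref{result3} along translations and the $\mf{S}_6$-action exactly as you do. Your extra care in justifying the $\mf{S}_6$-step purely lattice-theoretically via the equivariant combinatorial description of $\sigma_W$ (necessary precisely because $\mf{S}_6$ is not induced by automorphisms of $X$, so conjugacy in $\Aut(X)$ is unavailable) is a faithful expansion of what the paper leaves implicit.
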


There are $12$ Weber hexads modulo translations. 
One more relation is hidden in the remark above. For $\alpha\not\in W$, 
we have the unique decomposition $W=G\circleddash R,\,\, G\cap R=\{n_{\alpha}\}$.
Let $R^{\perp}$ be the orthogonal complement of $R$ at $n_{\alpha}$ and let 
$W_{\alpha}^{\perp}$ be the Weber hexad $G\circleddash R^{\perp}$. 
Then $\sigma_W$ and $\sigma_{W_{\alpha}^{\perp}}$ are conjugate, 
related by $\sigma_{W_{\alpha}^{\perp}}= \sigma_{\mu'(\alpha)}\sigma_W\sigma_{\mu'(\alpha)}$.
Modulo this relation, 
we have $6$ Weber hexads. Under the notations of Lemmas \ref{nsstr1} and \ref{nsstr2},
their patching subgroups are as follows.
\begin{center}
\begin{tabular}{ll} \hline
Weber hexad  & patchings \\ \hline
$[12]+[23]+[31]+[14]+[25]+[36]$ &  $\msf{e}_1+\msf{f}_1+\msf{e}_2+\msf{g}$ \\
$[12]+[13]+[23]+[24]+[15]+[36]$ &  $\msf{f}_1+\msf{e}_2+\msf{f}_2+\msf{g}$ \\
$[23]+[13]+[12]+[34]+[25]+[16]$ &  $\msf{e}_2+\msf{f}_2+\msf{g}$ \\
$[24]+[23]+[34]+[14]+[25]+[36]$ &  $\msf{e}_1+\msf{f}_1+\msf{f}_2+\msf{g}$ \\
$[25]+[23]+[35]+[54]+[21]+[36]$ &  $\msf{e}_1+\msf{e}_2+\msf{f}_2+\msf{g}$ \\
$[26]+[23]+[36]+[64]+[25]+[13]$ &  $\msf{e}_1+\msf{f}_1+\msf{g}$ \\
\end{tabular}
\end{center}
Thus we see that 
there are $6$ HW involutions 
that are not conjugate each other in $\Aut (X)$
if $X$ is Picard-general.

{\rmk} (1) In a forthcoming paper we will be able to determine when $\sigma_W$ is not free.\\
(2) The $6$ conjugacy classes of HW involutions are naturally ``dual'' 
to the $6$ Weierstrass points, in the sense that the $\mf{S}_6$ action on both
factors through an outer automorphism. Details are as follows.
There are $20$ Weber hexads $W$ which don't contain $n_0$ and 
conjugate each other. 
Writing $W$ uniquely as $W=G\circleddash R$ with $G\cap R=\{n_0\}$,
we can associate with such $W$ the G\"{o}pel subgroup $G$. 
But a G\"{o}pel subgroup $G=\{n_0,n_{ij},n_{kl},n_{mn}\}$ 
is determined just by 
the ``syntheme'' $(ij)(kl)(mn)\in \mf{S}_6$. Thus we obtain $20$ synthemes from the 
conjugacy class. The fact is that there appear only $10$ synthemes, and 
the synthemes not appearing here form a ``total'', which is 
the classical description of the dual of the $6$-set $\{1,\cdots,6\}$.\\
(3) The method of this paper is applicable to the case of
Picard-general quartic Hessian surfaces 
of \cite{dolgachev-keum}. In this case we have exactly one Enriques quotient.

\quad \\
{\textbf{Proof of Theorem \ref{thm2}:}
Let $N'$ be the group generated by $16$ translations $t_{\alpha}$, $16$ switches $\sigma_{\beta}$,
$16$ projections $p_{\alpha}$, $16$ correlations $p_{\beta}$, $60$ HG involutions $\sigma_G$,
$192$ HW involutions $\sigma_W$. The theorem follows from the following lemma 
as in \cite[Lemma 7.3]{kondo98}.
\begin{lem}
Let $\phi$ be an isometry of $NS(X)$ that preserves the ample cone.
Then there exists a $g\in N'$ such that $g\phi\in \Aut (D')$.
\end{lem}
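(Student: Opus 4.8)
The plan is to follow the strategy indicated in the paper: reduce the claim to the geometry of the Borcherds--Kondo polyhedral cone $D'$, and use the fact that the $192$ facets of $D'$ are exactly the data that the group $N'$ must be shown to act transitively on (up to the stabilizer $\Aut(D')$). First I would recall from \cite{kondo98} that the ample cone of $X$ admits a fundamental domain $D'$ for the action of $\Aut(X)$, bounded by finitely many hyperplanes, and that the walls of $D'$ fall into finitely many $\Aut(D')$-orbits. The key structural input is Kondo's analysis showing that there are $192$ facets of $D'$ which are \emph{not} reflective and which, in the original treatment, were crossed only by the infinite-order Keum automorphisms. The whole point of Theorem~\ref{thm2} is that each such facet can instead be crossed by an HW involution $\sigma_W$, so that $N'$ already contains enough elements to move across every wall of $D'$.

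The main steps I would carry out are as follows. First, given an isometry $\phi$ of $NS(X)$ preserving the ample cone, consider the image $\phi(D')$; since $D'$ is a fundamental domain, there is a chamber of the $\Aut(X)$-action containing a fixed interior point, and $\phi(D')$ is obtained from $D'$ by crossing a finite sequence of walls. Second, I would set up a descent/distance argument: measure the ``distance'' of $\phi(D')$ from $D'$ by the number of walls separating them (or by the value of a linear functional against a fixed ample class), and show that whenever $\phi(D') \neq D'$, some generator $g \in N'$ strictly decreases this distance. Third, the wall-crossing dictionary is the crux: for each of the finitely many $\Aut(D')$-orbits of walls of $D'$, I must exhibit a specific element of $N'$ — a translation, switch, projection, correlation, HG involution, or HW involution — whose reflection-like action carries $D'$ across that wall. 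The walls correspond to the $(-2)$-curves and the special configurations ($(16)_6$ nodes and tropes, G\"opel and Weber hexads) already enumerated in Sections~\ref{166}--\ref{reye}, so this is precisely where the explicit involutions come in.

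The hard part will be the third step, and specifically handling the $192$ non-reflective facets. For the reflective walls the matching element of $N'$ is essentially a reflection already present in Kondo's generating set (translations, switches, projections, correlations, HG involutions), so those cases are routine. The genuine obstacle is verifying that the $192$ HW involutions $\sigma_W$ act on the $192$ problematic facets in the required one-to-one way — that crossing each such facet is realized by exactly one $\sigma_W$, up to $\Aut(D')$. This requires identifying, for the model hexad $W_0$, the induced action of $\sigma_{W_0}$ on $NS(X)$ (which the remark following Proposition~\ref{neg} makes explicit) and checking that it reflects $D'$ across the corresponding wall; the general case then follows by the $\mathfrak{S}_6$-symmetry and the translation relation $\sigma_{t(W)} = t\,\sigma_W\, t$. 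Once every $\Aut(D')$-orbit of walls is matched with a generator of $N'$, the descent argument terminates with $\phi(D') = D'$ up to $\Aut(D')$, i.e.\ $g\phi \in \Aut(D')$ for a suitable $g \in N'$, which is exactly the statement of the lemma.
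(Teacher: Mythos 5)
Your proposal follows essentially the same route as the paper: the actual proof is precisely the descent you describe, implemented by taking the projected Weyl vector $w'=2H-\sum N_{\alpha}/2$, choosing $g\in N'$ minimizing $(g(\phi(w')),w')$, handling the walls of Kondo's Cases (0)--(2) as in \cite{kondo98}, and disposing of the $192$ Case (3) facets (those indexed by Weber hexads, with $4r'=3H-2\sum_{\alpha\in W}N_{\alpha}$) via the identity $\sigma_W(w')=w'+8r'$, which forces $(g(\phi(w')),r')>0$ by minimality. One small caution: what is needed, and what Proposition \ref{neg} and its Remark actually give, is only that $\sigma_W$ displaces $w'$ by a positive multiple of $r'$ --- $\sigma_W$ need not literally reflect $D'$ across that wall, so your third step should be phrased as a computation of $\sigma_W(w')$ rather than a chamber-geometry claim, but this does not change the structure of your argument.
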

\begin{proof}
Let $w'=2H-\sum N_{\alpha}/2$ be the projection of the Weyl vector $w$.
Let $v=\phi (w')$ and 
let $g\in N'$ be an element that attains the minimum $min\{(g(v), w')|g\in N'\}$.
If $r$ is the Leech root corresponding to \cite[Lemma 4.6, Case (0),(1),(2)]{kondo98},
then as in \cite{kondo98} we have $(r', g(v))>0$.

If $r$ is the Leech root corresponding to \cite[Lemma 4.6, Case (3)]{kondo98},
then it corresponds to some Weber hexad $W$ and \cite[Remark 6.3, (1)]{kondo98}
can be rewritten as 
\[4r'=3H-2\sum_{\alpha\in W}N_{\alpha}.\]
Using Proposition \ref{neg} and its Remark, we have 
\[\sigma_W (w')=w'+8r'.\]
Thus, we have 
\begin{eqnarray*}
(g(v), w')\le (g(v), \sigma_W^{-1} (w'))= (g(v),w')+8(g(v),r'), \\
(g(v),r')>0
\end{eqnarray*}
Hence $g(v)\in D'$.
\end{proof}

{\rmk} Unfortunately, $\Aut (X)$ cannot be generated only by the subset 
\[S=\{t_{\alpha},\sigma_{\beta},\sigma_{G},\sigma_W|\alpha\in J(C)_2,\beta\in S(C),
G:\text{G\"{o}pel tetrad},W:\mathrm{Weber\ hexad}\},\] 
introduced in this paper.
It is easy to see that for any element $\varphi$ written as a product of elements 
in $S$, we have $(w',\varphi(w'))\in 4\bb{Z}$. But the projection $p_{\alpha}$
have $(w',p_{\alpha}(w'))=26.$

\medskip
{\ack} The author expresses his sincere gratitude to Professor Shigeru Mukai.
He suggested using Torelli theorem for Enriques surfaces 
in proving Proposition \ref{correspondence}, 
which was a better method of counting than that of \cite{ohashi} and 
an important step for the computation in this paper.
He explained his study of \cite{mukai} in process and led the author to his conjecture.

Financial support has been provided by the Research Fellowships of the
Japan Society for the Promotion of Science for Young Scientists.



\begin{thebibliography}{ABCDEFG}
\bibitem{barth-peters}
	{W. Barth and C. Peters},
	{Automorphisms of Enriques surfaces},
	{Invent. math.},
	\textbf{73} (1983), 383-411.
\bibitem{bourbaki}
	{N. Bourbaki},
	{\'{E}l\'{e}ments de Math\'{e}matique. Groupes et Alg\`{e}bres de Lie},
	{Chap. IV, V et VI}.
\bibitem{conway-sloane}
	{J. H. Conway and N. J. A. Sloane},
	{Sphere packings, lattices and groups, 3rd ed.},
	{Grundlehren der Mathematischen Wissenschaften}
	\textbf{290}, {Springer-Verlag}, (1999).
\bibitem{dolgachev-keum}
	{I. V. Dolgachev and J. H. Keum},
	{Birational automorphisms of quartic hessian surfaces},
	{Trans. Amer. Math. Soc.},
	\textbf{354} (2002), 3031-3057.
\bibitem{hut1}
	{J. I. Hutchinson},
	{The Hessian of the cubic surface}, 
	{Bull. Amer. Math. Soc.},
	\textbf{5} (1899), 282-292.
\bibitem{hut2}
	{J. I. Hutchinson},
	{The Hessian of the cubic surface, II}, 
	{Bull. Amer. Math. Soc.},
	\textbf{6} (1899), 328-337.
\bibitem{hut01}
	{J. I. Hutchinson},
	{On some birational transformations of the Kummer surfaces into itself},
	{Bull. Amer. Math. Soc. (2)},
	\textbf{7} (1901), 211-217.
\bibitem{klein}
	{F. Klein},
	{Ueber Configurationen, Welche der Kummer'schen Fl\"{a}che Zugleich 
	Eingeschrieben und Umgeschrieben Sind},
	{Math. Ann.},
	\textbf{27} (1886), 106-142.
\bibitem{maria}
	{M. R. Gonzalez-Dorrego},
	{$(16,6)$ configurations and geometry of Kummer surfaces in $\bb{P}^3$},
	{Mem. Amer. Math. Soc. 107}, (1994). 
\bibitem{keum90}
	{J. H. Keum},
	{Every algebraic Kummer surface is the $K3$-cover of an Enriques surface},
	{Nagoya Math. J.},
	\textbf{118} (1990), 99-110.
\bibitem{keum97}
	{J. H. Keum},
	{Automorphisms of Jacobian Kummer surfaces},
	{Compositio Math.},
	\textbf{107} (1997), 269-288.
\bibitem{kondo98}
	{S. Kondo},	
	{The automorphism group of a generic Jacobian Kummer surface},
	{J. Algebraic Geometry}, 
	\textbf{7} (1998), 589-609.
\bibitem{mukai}
	{S. Mukai},
	{Kummer's quartics and numerically reflective involutions of Enriques surfaces},
	{RIMS Preprint, 1633.}
\bibitem{nikulin-jac}
	{V. V. Nikulin},
	{An analogue of the Torelli theorem for Kummer surfaces 
	of Jacobians (English translation)},
	{Math. USSR Izv.},
	\textbf{8} (1974), 21-41. 
\bibitem{nikulin-sym}
        {V. V. Nikulin},
	{Integral symmetric bilinear forms and some of their applications (English translation)},
	{Math. USSR Izv.},
	\textbf{14} (1980), 103-167.
\bibitem{ohashi}
	{H. Ohashi},
	{On the number of Enriques quotients of a $K3$ surface},
	{Publ. Res. Inst. Math. Sci.},
	\textbf{43} (2007), 181-200. 
\bibitem{ohashi07pre}
	{H. Ohashi}, 
	{Counting Enriques quotients of a $K3$ surface},
	{RIMS Preprint, 1609}.
\end{thebibliography}
\end{document}